\newtheorem{theorem}{Theorem}[section]
\newtheorem{proposition}{Proposition}[section]
\newtheorem{lemma}{Lemma}[section]
\theoremstyle{definition}
{}
\theoremstyle{remark} 
\newtheorem{remark}{Remark}[section]
\newcommand{\hth}{\hat{\theta}}
\newcommand{\spbp}{\sqrt{-1}\partial \bar{\partial}}
\numberwithin{equation}{section}
\begin{document}
\title[The deformed Hermitian Yang-Mills equation on three-folds]{The deformed Hermitian Yang-Mills equation on three-folds}
\author{Vamsi Pritham Pingali}
\address{Department of Mathematics, Indian Institute of Science, Bangalore, India - 560012}
\email{vamsipingali@iisc.ac.in}
\begin{abstract} 
We prove an existence result for the deformed Hermitian Yang-Mills equation for the full admissible range of the phase parameter, i.e., $\hat{\theta} \in (\frac{\pi}{2},\frac{3\pi}{2})$, on compact complex three-folds conditioned on a necessary subsolution condition. Our proof hinges on a delicate analysis of a new continuity path obtained by rewriting the equation as a generalised Monge-Amp\`ere equation with mixed sign coefficients. 
\end{abstract}
\maketitle
\section{Introduction}\label{Introsec}
\indent Motivated from mirror symmetry considerations, Jacob and Yau \cite{jacobyau} introduced the deformed Hermitian Yang-Mills equation for a metric $h$ on a holomorphic line bundle $L$ over a compact K\"ahler manifold $(X,\omega)$ of complex dimension $n$ :
\begin{gather}
\mathrm{Im}((\omega-\Theta)^n) =\tan(\hat{\theta}) \mathrm{Re}((\omega-\Theta)^n),
\label{gendhymequation}
\end{gather}  
where $\Theta$ is the curvature of the Chern connection of $h$ and $\hat{\theta}$ is a constant parameter (called the ``phase angle") satisfying the condition that the (topologically invariant) integrals of both sides of \ref{gendhymequation} are equal. This equation may be rewritten as follows \cite{jacobyau}. 
\begin{gather}
\hat{\theta} = \theta (\Theta),
\label{phaserewriting}
\end{gather}
where for any real $(1,1)$-form $\alpha$ whose associated endomorphism $A^i _j =\alpha_{j\bar{k}}\omega^{i\bar{k}}$ has eigenvalues $a_i$,
\begin{gather}
\theta(\alpha) := \sum_i \arctan(a_i).
\label{defofphase}
\end{gather}
In \ref{defofphase} the $\arctan$s lie in $(-\frac{\pi}{2},\frac{\pi}{2})$.\\
\indent In \cite{collinsjacobyau} the following existence result was proved for this equation.
\begin{theorem}[Collins-Jacob-Yau \cite{collinsjacobyau}]
Fix $\Theta_0 \in 2\pi c_1(L)$ and suppose the topological constant $\hth$ satisfies the critical phase condition $$\hth > \frac{(n-2)\pi}{2}.$$ Furthermore, suppose that there exists $\chi := \Theta_0 + \spbp u$ defining a subsolution. Assume that 
\begin{gather}
\theta(\chi)>\frac{(n-2)\pi}{2}.
\label{collinsphasecondition}
\end{gather}
 Then there exists a unique smooth Chern curvature form $\Theta \in 2\pi c_1(L)$ solving the deformed Hermitian-Yang-Mills equation \ref{gendhymequation}.
\label{collinsjacobyauresult}
\end{theorem}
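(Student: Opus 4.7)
The approach is the method of continuity. I would introduce a one-parameter family of equations $\mathcal{E}_t$, $t \in [0,1]$, deforming from a tautologically solvable problem at $t=0$ (for instance, $u_0 \equiv 0$ with $\Theta = \chi$) to the dHYM equation at $t=1$. A natural path deforms the phase function, e.g.,
\begin{equation*}
\theta(\chi + \spbp u_t) = (1-t)\theta(\chi) + t\hth .
\end{equation*}
The critical phase hypothesis together with the subsolution condition $\theta(\chi) > (n-2)\pi/2$ guarantee that the right-hand side remains strictly above $(n-2)\pi/2$ along the entire path, a regime in which the level sets of $\theta$ are convex and the operator is concave in the Hessian variables. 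The goal is to show that the set of $t \in [0,1]$ for which $\mathcal{E}_t$ admits a smooth solution is nonempty, open, and closed.

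For openness, I would verify that the linearization
\[
L(w) = \sum_i \frac{1}{1+a_i^2} w_{i\bar i}
\]
(in a frame diagonalizing the endomorphism associated to the current $\Theta$) is uniformly elliptic, and then apply the implicit function theorem on a suitably normalized space of smooth functions.

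Closedness demands $C^{2,\alpha}$ estimates uniform in $t$. The $C^0$ bound should follow after normalizing $\int u_t\, \omega^n = 0$ and combining an Alexandrov-Bakelman-Pucci-type argument with the subsolution inequality. The heart of the proof is the $C^2$ estimate, which I would obtain by applying the maximum principle to an auxiliary function of the form
\[
\log \lambda_1(\spbp u_t) + \phi(|\nabla u_t|^2) + \psi(u_t),
\]
where $\lambda_1$ denotes the largest eigenvalue, and exploiting the strict subsolution condition \ref{collinsphasecondition} to absorb the bad third-order terms produced by commuting covariant derivatives. Once $|\spbp u_t|$ is controlled, the Evans-Krylov theorem furnishes $C^{2,\alpha}$ bounds thanks to the concavity above the critical phase, after which Schauder bootstrapping supplies all higher derivatives. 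Uniqueness is then a consequence of the same concavity via the standard comparison principle.

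\textbf{The main obstacle} is the $C^2$ estimate. Unlike the Monge-Amp\`ere setting, the eigenvalues $a_i$ have indefinite sign, so the ellipticity of the linearization degenerates badly unless it is controlled from below, and the operator is not globally concave in the Hessian. The subsolution condition and the critical phase threshold must be used in concert, in a quantitative way, to dominate the bad terms arising in the maximum-principle computation and to keep the linearized operator uniformly elliptic along the continuity path; extracting this control is the essential technical difficulty.
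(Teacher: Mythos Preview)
This theorem is quoted from \cite{collinsjacobyau} as background; the present paper does not supply its own proof, so there is no in-paper argument to compare your proposal against. Your sketch is, however, an accurate outline of the Collins--Jacob--Yau method: they use precisely the phase-deformation continuity path $\theta(\chi_{u_t}) = (1-t)\theta(\chi) + t\hat\theta$, establish openness via the implicit function theorem, and obtain closedness from a $C^0$ estimate (ABP-type argument using the subsolution), a $C^2$ estimate via a maximum-principle computation on an auxiliary function of the form $\log(1+\lambda_1^2) + \phi(|\nabla u|^2) + \psi(u)$, and then Evans--Krylov plus Schauder bootstrapping. The concavity of the operator in the supercritical phase regime, together with the subsolution condition, is exactly what makes the $C^2$ step go through, as you identify.

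It may be worth noting that the present paper's contribution (Theorem~\ref{maintheorem}) is to remove the extra hypothesis \eqref{collinsphasecondition} when $n=3$, and for that it uses a \emph{different} continuity path \eqref{continuitypath} based on rewriting the equation as a generalised Monge--Amp\`ere equation with mixed-sign coefficients, rather than deforming the phase.
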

\begin{remark}
The condition on $\chi$ is implied by $$\hth > \left(n-2+\frac{2}{n} \right )\frac{\pi}{2},$$ and is hence not vacuous. It is expected that such a hypothesis on $\chi$ can be removed \cite{collinsjacobyau}. Indeed, in the case of $n=2$, this expectation was met in \cite{jacobyau} by rewriting \ref{gendhymequation} as a Monge-Amp\`ere equation. 
\end{remark}
The subsolution condition alluded to in Theorem \ref{collinsjacobyauresult} can be written as follows. (Proposition 8.1 in \cite{collinsjacobyau}.)
\begin{gather}
\Omega>0, \ n\Omega^{n-1}-\displaystyle \sum_k b_k k\Omega^{k-1} \omega^{n-k}>0,
\label{subsolution}
\end{gather}
where $\Omega=\sqrt{-1}\Theta -\omega\tan(\hat{\theta})$ and $b_k=\csc^{n-k}(\hth)(-1)^{n-k+1}{n \choose k}\sin((n-k-1)\hth)$ if $n$ is even and when $n$ is odd, $b_{2j}= \sec^{n+1-2j}(\hth)(-1)^{\frac{n+2j+1}{2}}{n \choose 2j}\sin((n-2j-1)\hth)$ and $b_{2j+1}= \sec^{n-2j}(\hth)(-1)^{\frac{n+2j+1}{2}}{n \choose 2j+1}\cos((n-2j-2)\hth)$.\\
\indent A considerable amount of work has been done on this equation \cite{collinsjacobyau, collinspicard, gchen, Pingen, pindhym, wang, smocyzk, jacob, collinsxieyau, han, mawang, yama, hanyama, collinsyau}. However, the hypothesis on the range of $\hat{\theta}$ has not been weakened beyond $n=2$. In this paper, we prove an existence result for the full admissible range of $\hat{\theta}$ in the case of three dimensions. Our main theorem is as follows.
\begin{theorem}
Let $(X,\omega)$ be a compact K\"ahler three-fold with a holomorphic line bundle $L$ over it. Assume that there exists a Hermitian metric $h_0$ on $L$ such that the corresponding form $\Omega = \sqrt{-1} \Theta_{h_0} -\omega \tan(\hth)$ satsifies the subsolution condition \ref{subsolution}. Then there exists a smooth solution to the deformed Hermitian Yang-Mills equation \ref{gendhymequation} if the phase angle $\hth$ satisfies $$\hth \in \left(\frac{\pi}{2}, \frac{3\pi}{2} \right).$$
\label{maintheorem}
\end{theorem}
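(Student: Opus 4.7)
The plan is a continuity method applied to a rewriting of~\ref{gendhymequation} as a generalised complex Monge-Amp\`ere equation with a mixed sign coefficient. Expanding $(\omega - \Theta)^3$ and separating real and imaginary parts, then substituting $\Omega = \sqrt{-1}\Theta - \omega\tan(\hth)$, a direct computation reduces~\ref{gendhymequation} to
\begin{equation*}
\Omega^3 = 3\sec^2(\hth)\,\omega^2 \wedge \Omega + 2\tan(\hth)\sec^2(\hth)\,\omega^3,
\end{equation*}
and the subsolution condition~\ref{subsolution} specialises, in dimension three, to $2\Omega^2 > \sec^2(\hth)\,\omega^2$. I would write $\Omega = \chi + \spbp u$ with $\chi := \sqrt{-1}\Theta_{h_0} - \omega\tan(\hth)$ the given subsolution, reducing the problem to finding $u$.

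For the continuity method I propose the one-parameter family
\begin{equation*}
\Omega_t^3 = 3t\sec^2(\hth)\,\omega^2 \wedge \Omega_t + c(t)\,\omega^3,\qquad \Omega_t = \chi + \spbp u_t,\ t\in[0,1],
\end{equation*}
with $c(t)$ determined cohomologically by integrating both sides over $X$. At $t=0$ this is a standard complex Monge-Amp\`ere equation, solvable by Yau's theorem since $[\chi]$ is K\"ahler by~\ref{subsolution}; at $t=1$ it recovers the generalised Monge-Amp\`ere form of the dHYM equation displayed above. Openness along the path follows from the implicit function theorem in H\"older spaces once the linearisation is shown to be elliptic with trivial kernel modulo constants, which is a consequence of a $t$-deformed analogue of~\ref{subsolution}.

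Closedness requires the usual hierarchy of a priori estimates $C^0 \to C^1 \to C^2 \to C^{2,\alpha}$ for $u_t$, uniform in $t$. The $C^0$ bound should come from a B\l ocki--Sz\'ekelyhidi type ABP or Alexandrov maximum principle argument using the subsolution. The decisive and hardest step is the $C^2$ estimate: the term $-3t\sec^2(\hth)\,\omega^2\wedge\Omega_t$ enters with the ``wrong'' sign, so the natural Hessian operator is not concave and Evans--Krylov cannot be invoked as a black box. The crux of the approach should be to arrange the continuity path so that the subsolution inequality~\ref{subsolution} is preserved at every $t \in [0,1]$; a maximum principle argument on the largest eigenvalue of $\Omega_t$ in the spirit of Hou--Ma--Wu and Sz\'ekelyhidi should then close the $C^2$ bound, using the three-dimensional identity expressing $\Omega^3$ in terms of $\det_\omega \Omega$ and $\omega^3$ to absorb the wrong-sign term. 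Once the $C^2$ bound is in hand, $C^{2,\alpha}$ follows from an Evans--Krylov type argument applied to an auxiliary concave reformulation, and higher regularity by Schauder bootstrap.
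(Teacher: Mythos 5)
Your overall strategy---reduce \ref{gendhymequation} to the generalised Monge--Amp\`ere equation \ref{genma} in dimension three and close a continuity method with uniform a priori estimates---is the same as the paper's. But the continuity path you choose is genuinely different from the one in the paper, and the point you yourself single out as ``the crux'' (arranging the path so that the subsolution/ellipticity cone is preserved for every $t$) is exactly where your sketch leaves a gap.

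You put the parameter $t$ on the \emph{good} middle term and the cohomological normalisation $c(t)$ on the constant term, so that $t=0$ gives complex Monge--Amp\`ere and Yau applies. The paper does the opposite: it uses $\Omega_\phi^3 = 3c_t\sec^2(\hth)\,\omega^2\Omega_\phi + 2t\tan(\hth)\sec^2(\hth)\,\omega^3$, with the cohomological normalisation $c_t$ on the middle term and the parameter $t$ on the ``bad'' constant, so that $t=0$ is a Fang--Lai--Ma Hessian equation. This choice is not cosmetic. Openness along the path reduces to the inequality $c_t(\lambda_2+\lambda_3)+2t\tan(\hth) > 0$ at the boundary of the ellipticity cone, and in the paper this follows from the sharp numerical fact $c_t^3 > t^2\sin^2(\hth)$ (Lemma \ref{usefulpropsofct}), itself a consequence of the explicit form $c_t = (1-t)\alpha + t$ with $\alpha\in(\tfrac13,1)$. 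For your path the analogous requirement is $2t^{3/2}\lvert\sec(\hth)\rvert^3 + c(t) > 0$ for all $t\in[0,1]$, where $c(t) = (1-t)\beta + 2\tan(\hth)\sec^2(\hth)$ and $\beta = 3\sec^2(\hth)\int\omega^2\Omega/\int\omega^3$. The function $t\mapsto 2t^{3/2}\lvert\sec(\hth)\rvert^3 + c(t)$ is convex and positive at $t=0,1$, but it is not automatically positive in between: positivity throughout amounts to $\beta\geq 3\lvert\sec(\hth)\rvert^3$, an inequality which does follow from Khovanskii--Teissier log-concavity together with the subsolution bound $\int\omega\Omega^2 > \sec^2(\hth)\int\omega^3$, but you neither notice this nor invoke it. Without that extra intersection-theoretic input your non-emptiness and openness argument does not close, and every downstream estimate (the partial convexity of the operator used in the Laplacian bound, the coefficient bounds in the Evans--Krylov step) is written for a different coefficient structure and would have to be rederived and re-verified from scratch.

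Two further corrections. The $n=3$ specialisation of \ref{subsolution} reads $\Omega^2 > \sec^2(\hth)\,\omega^2$, not $2\Omega^2 > \sec^2(\hth)\,\omega^2$. And your Evans--Krylov step cannot proceed by ``an auxiliary concave reformulation'': the operator $F$ here is concave only along its level set $F=1$, so no $G\circ F$ trick is available on an open neighbourhood. The paper instead runs a blow-up argument to a Liouville theorem (Proposition \ref{evanskrylovconstant}, Lemma \ref{Liouville}), bounding the secant matrix $K^{i\bar j}$ using concavity of $\lambda_2+\lambda_3$ (Lieb--Siedentop) together with the admissibility cone. Likewise the decisive $C^2$ estimate hinges not on a generic Hou--Ma--Wu computation but on the partial convexity of $F$ restricted to the tangent space of the constraint (Lemma \ref{convexitylem}), whose proof is a discriminant minimisation $ED - B^2 \geq 0$ over the ellipticity cone; this is the delicate point that your sketch does not anticipate.
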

The proof of theorem \ref{maintheorem} involves rewriting Equation \ref{gendhymequation} as the following generalised Monge-Amp\`ere equation (akin to the proof of Theorem 1.4 in \cite{pindhym}), to be solved for a smooth function $\phi$ such that $\Omega_{\phi}=\Omega+\spbp \phi >0$ and $3\Omega_{\phi}^2-3\omega^2\sec^2(\hth)>0$.
\begin{gather}
\Omega_{\phi}^3 = 3\omega^2 \Omega_{\phi}\sec^2(\hat{\theta})+2\omega^3 \tan(\hat{\theta}) \sec^2(\hat{\theta}).
\label{genma}
\end{gather}
To solve \ref{genma} we consider two cases. When $\tan(\hat{\theta})\geq 0$, the result follows from Theorem 1.2 of  \cite{pindhym}. Assume that $\tan(\hat{\theta})<0$. In this case, the method of continuity based on a path (Equation \ref{continuitypath} in Section \ref{Opennesssec})depending on a parameter $0\leq t \leq 1$ is used. At $t=0$, it turns out that existence holds because the resulting equation is a standard Hessian equation originally studied by Fang-Lai-Ma \cite{FangLaiMa}. In Section \ref{Opennesssec}, ellipticity is proved to be preserved along this continuity path, and the set of $t$ for which there is a solution is proved to be open. For closedness, we need \emph{a priori} estimates as usual. They are proved in steps in Section \ref{uniformandgradientestimatesec} and Section \ref{concludingsec}. The technical challenge in this otherwise standard procedure is that Equation \ref{genma} has mixed signs. Equations akin to it had been studied prior to this work \cite{gchen, zheng} but the ``bad" sign piece had been assumed to be quantifiably small. In this paper, it fortuitously turns out that despite $\tan(\hat{\theta})$ potentially being quite negative, the desired estimates can still be proved, albeit in a delicate manner. \\
\indent We expect that a similar continuity path would work for higher dimensions and be expected to aid in proving a result similar to Theorem 1.7 of \cite{gchen}. However, the complexity involved is formidable and we plan on addressing the same in future work.\\

\emph{Acknowledgements} : The author thanks Tristan Collins for clarifying some aspects of \cite{collinsjacobyau}. The author is also grateful to the anonymous referee for constructive feedback. The author is grateful to Chao-Ming Lin for pointing out an error in an earlier draft.

\section{Method of continuity - Openness}\label{Opennesssec}
\indent In the rest of the paper, we assume that $\hat{\theta} \in (\frac{\pi}{2},\pi)$, i.e., $\tan(\hat{\theta})<0$. To solve Equation \ref{genma} for this range of $\hat{\theta}$ assuming the existence of a metric $h_0$ whose curvature $\Theta_0$ satisfies $\Omega=\sqrt{-1}F_0 -\omega\tan(\hat{\theta})>0$ and $3\Omega^2-3\omega^2 \sec^2(\hat{\theta})>0$, we use the following continuity path depending on a parameter $0\leq t \leq 1$.
\begin{gather}
\Omega_{\phi}^3 = 3c_t\omega^2 \Omega_{\phi}\sec^2(\hat{\theta})+2t\omega^3 \tan(\hat{\theta}) \sec^2(\hat{\theta}),
\label{continuitypath}
\end{gather}
where 
\begin{gather}
c_t = \displaystyle \frac{\int \Omega^3-2t\tan(\hth) \sec^2(\hth)\int \omega^3}{\sec^2(\hth)\int 3\omega^2 \Omega}.
\label{defofct}
\end{gather}
All the properties of this constant that we shall need later on are summarised in the following lemma.
\begin{lemma}
$c_t$ satisfies the following properties.
\begin{enumerate}
\item $\frac{1}{3} < c_t \leq 1$.
\item $c_t^3 >t^2 \sin^2(\hth)$.
\end{enumerate}
\label{usefulpropsofct}
\end{lemma}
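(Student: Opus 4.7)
My plan is first to rewrite $c_t$ as an explicit affine function of $t$. Integrating \ref{genma} over $X$, and using that $\Omega_\phi$ and $\Omega$ are cohomologous, yields the cohomological identity $\int\Omega^3 = 3\sec^2(\hth)\int\omega^2\Omega + 2\tan(\hth)\sec^2(\hth)\int\omega^3$. Writing $A = \int\Omega^3$, $B = 3\sec^2(\hth)\int\omega^2\Omega$, and $D = -2\tan(\hth)\sec^2(\hth)\int\omega^3$ (with $D > 0$ since $\tan(\hth) < 0$), this identity reads $A + D = B$, so $c_t = (A + tD)/B = (1-t)(A/B) + t$. Setting $r := A/B = c_0$, the function $c_t = r + t(1-r)$ is affine and increases from $r$ to $1$ on $[0,1]$.

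For part (1), the bound $c_t \leq 1$ is immediate from monotonicity and $c_1 = 1$. For the lower bound I wedge the pointwise subsolution condition $3\Omega^2 - 3\sec^2(\hth)\omega^2 > 0$ (viewed as a positive $(2,2)$-form) with the positive $(1,1)$-form $\Omega$ and integrate, obtaining $3A > 3\sec^2(\hth)\int\omega^2\Omega = B$, so that $r > 1/3$. By monotonicity, $c_t > 1/3$ for all $t \in [0,1]$.

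For part (2), set $s := \sin(\hth) \in (0,1)$ (using $\hth \in (\pi/2, \pi)$) and introduce $g(t) := c_t^{3/2} - ts$; the claim $c_t^3 > t^2\sin^2(\hth)$ is equivalent to $g > 0$ on $[0,1]$. A direct computation gives $g''(t) = \frac{3}{4}(1-r)^2 c_t^{-1/2} > 0$, so $g$ is strictly convex, with positive endpoint values $g(0) = r^{3/2} > 0$ and $g(1) = 1 - s > 0$. If $g'$ has no root in $(0,1)$ then convexity forces $g$ to be monotone on $[0,1]$ and we are done. Otherwise, the unique interior critical point $t^*$ satisfies $c_{t^*} = \frac{4s^2}{9(1-r)^2}$, and the constraint $t^* < 1$ forces $4s^2 < 9(1-r)^2$. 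A short algebraic simplification then yields
\[
g(t^*) = \frac{s\bigl(27r(1-r)^2 - 4s^2\bigr)}{27(1-r)^3},
\]
whose numerator exceeds $s\cdot 9(1-r)^2(3r - 1) > 0$ because $r > 1/3$. The main subtlety I anticipate is precisely this last step: the naive estimate $27r(1-r)^2 \leq 4$ (sharp at $r = 1/3$) does not by itself control $4s^2$, which can itself be close to $4$. One must exploit the critical-point relation $4s^2 < 9(1-r)^2$ to eliminate the $s$-dependence, so that the strict inequality $r > 1/3$ established in part (1) is exactly what closes the estimate.
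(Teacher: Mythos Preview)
Your argument is correct. Part~(1) is essentially identical to the paper's: both rewrite $c_t$ as the affine interpolation $(1-t)r + t$ with $r = \int\Omega^3 / (3\sec^2(\hth)\int\omega^2\Omega)$, then obtain $r > 1/3$ by wedging the subsolution inequality $\Omega^2 > \sec^2(\hth)\,\omega^2$ against $\Omega$ and integrating.

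For part~(2) the routes diverge. You study $g(t) = c_t^{3/2} - t\sin(\hth)$ directly: convexity plus endpoint positivity reduces matters to a possible interior minimum, whose value you compute explicitly and bound from below using the critical-point relation $4s^2 < 9(1-r)^2$ together with $r>1/3$. The paper instead minimises the \emph{ratio} $f(t) = c_t^{3/2}/t$ on $(0,1]$; its sole critical point is $t = 2r/(1-r)$, which lies strictly to the right of $1$ precisely because $r > 1/3$, so $f$ is monotone decreasing on $(0,1]$ with minimum $f(1)=1 > |\sin(\hth)|$. The paper's device of dividing by $t$ strips out the $s$-dependence before optimising, which sidesteps your case split and the explicit evaluation of $g(t^*)$; on the other hand, your computation makes transparent exactly how the two ingredients $r>1/3$ and $4s^2 < 9(1-r)^2$ combine, which is a nice structural observation. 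Either way the hinge is the same inequality $r>1/3$: in the paper it places the critical point of $f$ outside $[0,1]$, and in your argument it makes the factor $3r-1$ positive.
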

\begin{proof}
Before proving the two statements, we recall that 
\begin{gather}
\displaystyle \int \Omega^3 = \sec^2(\hth)\int 3\omega^2 \Omega + 2\tan(\hth) \sec^2(\hth) \int \omega^3. \label{defofthetaagain}
\end{gather}
\begin{enumerate}
\item Substituting \ref{defofthetaagain} in \ref{defofct} we see that $$c_t=1+(1-t)2\tan(\hth)\frac{\int\omega^3}{\int 3\omega^2 \Omega} \leq 1$$ because $\tan(\hth)<0$. Since $\Omega^2 > \omega^2 \sec^2(\hth)$, we see that $\displaystyle \int \Omega^3 > \int  \omega^2 \Omega \sec^2(\hth)$, i.e., $\alpha=\frac{\int \Omega^3}{\sec^2(\hth)\int 3\omega^2 \Omega}$ satisfies $\frac{1}{3}<\alpha < 1$. Using \ref{defofthetaagain} and \ref{defofct} again, we see that
$$c_t = \displaystyle (1-t)\alpha +t >\frac{1-t}{3}+t\geq \frac{1}{3}.$$
\item We find a lower bound on $f(t)=\frac{c_t^{3/2}}{t}$ when $0<t\leq 1$. The critical point is easily seen to lie at $t=\frac{2\alpha}{1-\alpha} >1$. Hence, the minimum is attained at $t=1$. Therefore $f(t)\geq 1>\vert \sin(\hth) \vert$ and hence $c_t^3 > t^2 \sin^2(\hth)$.
\end{enumerate}
\end{proof}
\indent The strategy of the proof of theorem \ref{maintheorem} is the method of continuity, i.e., to prove that the set $S$ of $t\in [0,1]$ such that Equation \ref{continuitypath} has a smooth solution $\phi$ satisfying $\displaystyle \int \phi \omega^n =0$ is non-empty, open, and closed. At $t=0$ the equation reduces to a Hessian equation studied originally by Fang-Lai-Ma. It has a solution if and only if the cone condition $\Omega^2-\omega^2 \sec^2(\hth) c_0 >0$ is satisfied. (Theorem 1.1 in \cite{FangLaiMa}.) Since $\Omega^2-\omega^2 \sec^2(\hth)>0$ by assumption and $c_0<1$, the cone condition is met and $S$ is therefore not empty. \\
\indent To prove openness, the usual method is the implicit function theorem on Banach manifolds. All that is needed is to prove that the ellipticity of the linearised equation is preserved by the continuity path - a fact asserted by the following lemma.
\begin{lemma}
The linearisation of Equation \ref{continuitypath} given by $$Lu =(3\Omega_{\phi}^2-3c_t\omega^2 \sec^2(\hth))\spbp u$$ is an elliptic operator for all $t$ lying in the connected component of $S$ containing $0$. Moreover, $\Omega_{\phi}>0$ for all such $t$.
\label{ellipticitypreservation}
\end{lemma}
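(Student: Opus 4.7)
The plan is to set up a connectedness argument on the subset $S_0^{*}\subseteq S_0$ consisting of those $t$ for which $\Omega_{\phi} > 0$ and the strict ellipticity condition $3\Omega_{\phi}^2 - 3c_t\omega^2\sec^2(\hth) > 0$ as a $(2,2)$-form both hold; I intend to show $S_0^{*} = S_0$. Non-emptiness at $t=0$ is immediate: the subsolution hypothesis gives $\Omega^2 > \omega^2\sec^2(\hth)$, and since $c_0 < 1$ by Lemma \ref{usefulpropsofct}(1), the Fang--Lai--Ma cone condition $\Omega^2 > c_0\omega^2\sec^2(\hth)$ is fulfilled by the initial data and is inherited by the FLM solution. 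Openness of $S_0^{*}$ in $S_0$ is routine: both defining inequalities are open pointwise conditions, and smooth dependence of $\phi_t$ on $t$ is guaranteed by the implicit function theorem once strict ellipticity is in force.

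For closedness, the plan is to diagonalise $\Omega_{\phi_t}$ with respect to $\omega$ at each point and restate everything in terms of the eigenvalues $\lambda_1,\lambda_2,\lambda_3$: strict ellipticity becomes $\lambda_j\lambda_k > c_t\sec^2(\hth)$ for every pair $\{j,k\}\subset\{1,2,3\}$, and Equation \ref{continuitypath} becomes the scalar identity
\[
\lambda_1\lambda_2\lambda_3 \;=\; c_t\sec^2(\hth)(\lambda_1+\lambda_2+\lambda_3) + 2t\tan(\hth)\sec^2(\hth).
\]
Suppose closedness fails at some $t_{*}\in S_0$. Taking limits from $S_0^{*}$ one retains the non-strict inequalities $\lambda_i\geq 0$ and $\lambda_j\lambda_k\geq c_{t_{*}}\sec^2(\hth)$ everywhere, and some pair---say $\{1,2\}$---attains equality at some point $p$. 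Substituting $\lambda_1\lambda_2 = c_{t_{*}}\sec^2(\hth)$ into the equation cancels the $\lambda_3$-terms on both sides and yields
\[
\lambda_1 + \lambda_2 \;=\; -\frac{2t_{*}\tan(\hth)}{c_{t_{*}}},
\]
whose right-hand side is positive because $\tan(\hth) < 0$. The AM--GM inequality $(\lambda_1+\lambda_2)^2 \geq 4\lambda_1\lambda_2$ then forces $t_{*}^2\sin^2(\hth) \geq c_{t_{*}}^3$, in direct contradiction with Lemma \ref{usefulpropsofct}(2); hence strict ellipticity is preserved at $t_{*}$.

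The hard part, as the authors emphasise, is this closedness step, where the ``bad'' sign of $\tan(\hth)$ has to be controlled algebraically. The second item of Lemma \ref{usefulpropsofct} has been tailor-made for precisely this collision with AM--GM, and in turn it is what forces the continuity path to carry the delicate normalising constant $c_t$ of \ref{defofct} rather than a naive choice. Once strict ellipticity is in hand, positivity of $\Omega_{\phi}$ is a soft consequence: since $c_t > 0$ by Lemma \ref{usefulpropsofct}(1), strict ellipticity gives $\lambda_j\lambda_k > 0$ for each pair, so the three eigenvalues cannot vanish and must share a common sign; they are positive at $t=0$ and vary continuously in $t$, so they stay positive throughout $S_0$.
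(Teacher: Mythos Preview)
Your argument is correct and follows essentially the same route as the paper's: both run a connectedness argument on the conditions $\Omega_{\phi}>0$ and $\lambda_j\lambda_k>c_t\sec^2(\hth)$, and at a putative first degeneration use AM--GM together with Lemma~\ref{usefulpropsofct}(2) to derive a contradiction. The only cosmetic difference is that the paper rewrites the equation as $\lambda_1[\lambda_2\lambda_3-c_t\sec^2(\hth)]=\sec^2(\hth)\big(c_t(\lambda_2+\lambda_3)+2t\tan(\hth)\big)$ and shows the right side is strictly positive via $\lambda_2+\lambda_3\geq 2\sqrt{\lambda_2\lambda_3}$, whereas you substitute first and apply AM--GM to $(\lambda_1+\lambda_2)^2\geq 4\lambda_1\lambda_2$; the inequality $c_t^3>t^2\sin^2(\hth)$ is the punchline in both cases.
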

\begin{proof}
 It suffices to prove that for all such $t$, the conditions $*_t$ given by 
\begin{gather}
3\Omega_{\phi}^2-3c_t\omega^2 \sec^2(\hth)>0, \ and \nonumber 
\\ \Omega_{\phi}>0 
\end{gather} hold. Obviously $*_0$ hold. The conditions $*_t$ are clearly open. It suffices to prove their closedness, i.e., if $t_n \rightarrow t \in S$ and $*_{t_n}$ hold, then so do $*_t$. Suppose not. That is, let $t$ be the first time where $*_t$ fail. At such a $t$, there exists a point $p$ such that one of the conditions in $*_t$ become degenerate. Let us choose normal coordinates for $\omega$ near $p$ and also diagonalise $\Omega_{\phi}(p)=\displaystyle \sum_i \lambda_i \sqrt{-1}dz^i \wedge d\bar{z}^i$, where $\lambda_3\leq \lambda_2 \leq \lambda_1$. We see that at $p$,
\begin{gather}
\lambda_1 \lambda _2 \lambda _3 = c_t \displaystyle \sum_i \lambda_i \sec^2(\hth) +2t\tan(\hth)\sec^2(\hth) \nonumber \\
\Rightarrow \lambda_1 [\lambda_2 \lambda _3 - c_t \sec^2(\hth)] = c_t(\lambda_2+\lambda_3)\sec^2(\hth)+2t\tan(\hth) \sec^2(\hth), 
\label{Atpopenness}
\end{gather}
and
\begin{gather}
\lambda_i \geq 0, \ \forall \ i\nonumber \\
\lambda_i \lambda_j \geq c_t \sec^2 (\hth), \ \forall \ i\neq j \nonumber \\
 c_t(\lambda_i+\lambda_j)+2t\tan(\hth)  \geq 0 \ \forall \ i\neq j.
\label{conditionsholdingatp}
\end{gather}
The second condition in \ref{conditionsholdingatp} shows that $\lambda_i >0 \ \forall \ i$. The least value of the third expression is bounded as follows.
\begin{gather}
c_t(\lambda_2+\lambda_3)+2t\tan(\hth) \geq 2c_t\sqrt{\lambda_2 \lambda_3} +2t\tan(\hth) \geq 2c_t^{3/2} \vert \sec(\hth) \vert+2t\tan(\hth)>0,
\label{leastvalueofexp}
\end{gather}
where the last inequality follows from Lemma \ref{usefulpropsofct}. Therefore, the only possible way to reach the boundary of the conditions in \ref{conditionsholdingatp} is through
$$\lambda_2 \lambda_3 = c_t\sec^2(\hth).$$
However, in this case, \ref{Atpopenness} and \ref{leastvalueofexp} produce a contradiction. Thus, $*_t$ also holds, completing the proof of the lemma.
\end{proof}
Given that the linearisation of \ref{continuitypath} is elliptic, openness is standard and is hence omitted.
\section{$C^0$ and Laplacian estimates}\label{uniformandgradientestimatesec}
\indent To prove the closedness of $S$, one needs \emph{a priori} estimates. As usual, we prove them in stages - a $C^0$ estimate which leads to a Laplacian estimate and further to higher order estimates. To prove the $C^0$ estimate we follow the proof of Proposition 11 in \cite{gabor} which is in turn based on a method of Blocki \cite{Blocki,Blocki2}. 
\begin{proposition}
Along the continuity path \ref{continuitypath}, $\Vert \phi \Vert_{C^0} \leq C_0$ where $C_0$ is independent of $t$.
\label{coestimate}
\end{proposition}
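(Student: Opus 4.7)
The plan is to adapt the complex Alexandrov-Bakelman-Pucci method of Blocki \cite{Blocki,Blocki2} and Szekelyhidi \cite{gabor} to our continuity path. First, I would normalise $\phi$ by its integral mean so that $\int \phi \, \omega^3 = 0$; standard arguments (e.g.\ a Green's function bound applied to $\Delta_\omega \phi$) then reduce the estimate to controlling $L := -\inf_X \phi$ by a $t$-independent constant.

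Next, I would rewrite Equation \ref{continuitypath} in eigenvalue form. Letting $\lambda_1 \leq \lambda_2 \leq \lambda_3$ denote the eigenvalues of $\Omega_\phi$ with respect to $\omega$, the equation becomes
$$F_t(\lambda) := \lambda_1 \lambda_2 \lambda_3 - c_t \sec^2(\hth)(\lambda_1 + \lambda_2 + \lambda_3) = 2t \tan(\hth) \sec^2(\hth),$$
with admissible cone $\Gamma_t := \{\lambda : \lambda_i \lambda_j > c_t \sec^2(\hth) \text{ for all } i \neq j\}$, on which Lemma \ref{ellipticitypreservation} forces $\lambda_i > 0$ and guarantees ellipticity. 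Crucially, since $c_t \leq 1$ by Lemma \ref{usefulpropsofct}(1), the standing hypothesis $\Omega^2 > \omega^2 \sec^2(\hth)$ implies $\lambda(\Omega) \in \Gamma_t$ strictly and uniformly in $t$; this furnishes the C-subsolution in the sense of \cite{gabor} needed in the sequel.

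With this framework in place, the Blocki-Szekelyhidi argument proceeds as follows: assume for contradiction that $L$ is large, pick $x_0$ with $\phi(x_0) = -L$, introduce a quadratic perturbation $v = \phi + \epsilon(|z|^2 - r^2)$ in a coordinate chart centred at $x_0$, and apply the complex ABP estimate on the sublevel set $\{v \leq 0\}$. Using the equation together with the strict admissibility of the subsolution $\Omega$, one bounds the resulting ABP integral by a uniform constant, producing a contradiction once $L$ exceeds a universal threshold.

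The main obstacle is the mixed-sign right-hand side $2t\tan(\hth)\sec^2(\hth)$, which is negative for our range of $\hth$. Fortunately, in the ABP step this term appears as a uniformly bounded additive constant and so only affects the comparison constant, not the qualitative structure of the estimate. What genuinely matters is that both the ellipticity of the linearisation and the strict admissibility of the subsolution hold uniformly in $t$; this is precisely the content of Lemma \ref{usefulpropsofct}, whose bounds on $c_t$ are what make the Blocki-Szekelyhidi machinery applicable along the entire continuity path.
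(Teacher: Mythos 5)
Your proposal follows the same Blocki--Sz\'ekelyhidi ABP strategy as the paper, including the key observation that Lemma \ref{usefulpropsofct} and the standing hypothesis $\Omega^2>\omega^2\sec^2(\hth)$ provide a $t$-uniform strict subsolution. The only substantive step you leave implicit is the paper's Lemma \ref{c0useful} bounding $\det(v_{i\bar{j}})$ on the contact set: there, $D^2 v \ge 0$ on $P$ forces $\lambda_i(\Omega_\phi) \ge \Omega_{i\bar i}-\epsilon$, so $\lambda_2\lambda_3-c_t\sec^2(\hth)$ stays uniformly positive, and rearranging the equation as $\lambda_1\bigl(\lambda_2\lambda_3 - c_t\sec^2(\hth)\bigr) = c_t\sec^2(\hth)(\lambda_2+\lambda_3)+2t\tan(\hth)\sec^2(\hth)$ shows $\lambda_1\to\infty$ would force $\lambda_2\lambda_3\to c_t\sec^2(\hth)$, a contradiction.
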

\begin{proof}
By adding a constant, we may renormalise $\phi$ to satisfy $\sup \phi =0$. Therefore, we only need to obtain a lower bound for $L=\inf \phi$. A standard Green's function argument shows that we have a uniform $\Vert \phi \Vert_{L^1}$ bound. The weak Harnack inequality then implies an $L^p$ bound for some $p>2$ as in \cite{gabor}.\\
\indent Choose coordinates $z$ satisfying $\vert z \vert <1$ such that $L$ is attained at the origin. Let $v=\phi+\epsilon \vert z \vert^2$ for some small $\epsilon>0$ to be chosen as we go along. An Alexandroff-Bakelman-Pucci type maximum principle (Proposition 10 in \cite{gabor}) shows that the following inequality holds.
\begin{gather}
c_0 \epsilon ^{2n} \leq \int_{P} \det(D^2 v),
\end{gather}
where $P$ is the ``lower contact set" (defined in Proposition 10 in \cite{gabor}) where $v$ is convex and $\vert Dv \vert \leq \frac{\epsilon}{2}$. Since $\det(D^2 v) \leq 2^{2n} \det(v_{i\bar{j}})^2$ on $P$, if we manage to prove that  $\vert v_{i\bar{j}} \vert \leq C$ on $P$, then we are done. Indeed, on $P$, $L<v(x)<L+\frac{\epsilon}{2}$ and hence $$\frac{\Vert v \Vert_{L^p}^p}{\vert L+\frac{\epsilon}{2} \vert^p}\geq vol(P)\geq C\epsilon^{2n}.$$ This inequality produces a contradiction if $\vert L \vert$ is large because $\Vert v \Vert_{L^p}\leq \Vert \phi \vert_{L^p}+C$. \\
\indent We now have the following estimate.
\begin{lemma}
On $P$, $\vert \phi_{i\bar{j}} \vert \leq C$ for some $C$ independent of $t$.
\label{c0useful}
\end{lemma} 
\begin{proof}
At a point $p \in P$, choose holomorphic normal coordinates for $\omega$ such that $\Omega_{\phi}(p)=\displaystyle \sum_i \lambda_i \sqrt{-1} dz^i \wedge d\bar{z}^i$. On $P$, since $D^2 v \geq 0$, we see that $\phi_{i\bar{j}} \geq -\epsilon \delta_{i\bar{j}}$ everywhere and in particular, at $p$. Therefore, $\lambda_i\geq \Omega_{i\bar{i}}(p)-\epsilon$. Choose $\epsilon$ so small that $\Omega \geq 2\epsilon \omega$ throughout $P$. Moreover, since $\Omega^2 > \omega^2 \sec^2(\hth)$ by assumption, if $\epsilon$ is small enough, throughout $P$ we have $\Omega_{\phi}^2 -\omega^2 \sec^2(\hth) \geq \frac{\Omega^2-\omega^2 \sec^2(\hth)}{2}$. Assume that $\lambda_3 \leq \lambda_2 \leq \lambda_1$. Note that $\lambda_2 \geq \lambda_3 \geq \epsilon$. We claim that $\lambda_1 \leq C$ for some $C$. Suppose not, i.e., there exists a sequence of times $t_n\rightarrow T$ such that $\lambda_{1,n}(p_n)\rightarrow \infty$. We abuse notation by writing $\lambda_1\rightarrow \infty$. \\
 Using \ref{Atpopenness} we get the following.
\begin{gather}
\lambda_3 = \sec^2(\hth)\frac{c_t(\lambda_2+\lambda_1)+2t\tan(\hth)}{\lambda_2 \lambda_1-c_t\sec^2(\hth)}.
\end{gather}
If $\lambda_2\rightarrow \infty$, then $\lambda_3\rightarrow 0$, which is not possible. Hence, $\lambda_2$ is bounded. Therefore, $\lambda_2\lambda_3\rightarrow c_t \sec^2(\hth)$, which is a contradiction because $\lambda_3 \lambda_2 -c_t \sec^2(\hth) \geq \frac{(\Omega^2-\omega^2 \sec^2(\hth))_{1\bar{1}2\bar{2}}}{2} >0$. Therefore, $\vert \phi_{i\bar{j}} \vert \leq C$ for all $p\in P$.
\end{proof}
Therefore, a uniform  $C^0$ estimate holds.
\end{proof}
Before proving further estimates, we define the expressions  
\begin{gather}
F(A) = \frac{3c_t\omega^2 \Omega_{\phi}}{\Omega_{\phi}^3}+\frac{2t\omega^3\tan(\hth)}{\Omega_{\phi}^3} \nonumber\\
\tilde{F}(A)=F(A) \sec^2 (\hth)\label{defofF},
\end{gather}
where $A^{i}_{j}=\omega^{i\bar{k}}(\Omega_{\phi})_{j\bar{k}}$. So Equation \ref{continuitypath} can be written as
\begin{gather}
\tilde{F}(A)=1.
\label{continuitypathintermsofF}
\end{gather}

\indent We first prove a useful convexity property of $\tilde{F}$ when restricted to a certain subset.
\begin{lemma}
Let $F,\tilde{F}$ be as above. Let $A=diag(\lambda_1, \lambda_2, \lambda_3)$ be a positive-definite diagonal matrix, and $B$ be a $3\times 3$ Hermitian matrix satisfying
\begin{gather}
\left (c_t-\frac{c_t \sum_i \lambda_i +2t\tan(\hth)}{\lambda_{\mu}} \right )B_{\mu \bar{\mu}}=0,
\label{conditiononB}
\end{gather}
where $\lambda_i$ satisfy 
\begin{gather}
\tilde{F}(A) = 1, \nonumber \\
\lambda_i \lambda_j > c_t \sec^2 (\hth) \nonumber \\
(\lambda_i+\lambda_j)c_t +2t\tan(\hth) >0. 
\label{lambdasatisfy}
\end{gather}
Then the following convexity property holds at $A$.
\begin{gather}
\frac{\partial^2 F}{\partial A_{\mu \bar{\nu}} \partial A_{\alpha \bar{\beta}}} B_{\mu \bar{\nu}}B_{\alpha \bar{\beta}}\geq \frac{c_t \sum_i \lambda_i +2t\tan(\hth)}{\det(A)} \sum_{\alpha \neq \mu}\frac{\vert B_{\mu \bar{\alpha}} \vert^2}{\lambda_{\mu}\lambda_{\alpha}}.
\label{convexeq}
\end{gather}
\label{convexitylem}
\end{lemma}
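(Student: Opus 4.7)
The plan is to compute the full Hessian of $F$ at the diagonal matrix $A = \mathrm{diag}(\lambda_1,\lambda_2,\lambda_3)$ by first expressing $F$ as a function of the eigenvalues, and then invoking the standard formula for the second variation of a symmetric function on the space of Hermitian matrices. First I would show that in a frame simultaneously diagonalising $\omega$ and $\Omega_\phi$, a direct expansion of $\omega^2 \Omega_\phi,\; \omega^3,\; \Omega_\phi^3$ gives
\[
F(\lambda) \;=\; \frac{G}{\sigma_3(\lambda)}, \qquad G \;:=\; c_t\sigma_1(\lambda) + 2t\tan(\hth),
\]
where $\sigma_k$ are the elementary symmetric polynomials. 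Elementary differentiation yields $F_\mu = (c_t\lambda_\mu - G)/(\sigma_3\lambda_\mu)$, $F_{\mu\mu} = -2F_\mu/\lambda_\mu$, and for $\mu \neq \nu$,
\[
F_{\mu\nu} \;=\; \frac{G - c_t(\lambda_\mu + \lambda_\nu)}{\sigma_3\lambda_\mu\lambda_\nu}, \qquad \frac{F_\mu - F_\nu}{\lambda_\mu - \lambda_\nu} \;=\; \frac{G}{\sigma_3\lambda_\mu\lambda_\nu}.
\]
The last identity is the crucial one: it shows that the ``Newton'' off-diagonal part of the Hessian, $\sum_{\mu\neq\nu}\tfrac{F_\mu - F_\nu}{\lambda_\mu - \lambda_\nu}|B_{\mu\bar\nu}|^2$, already equals the right-hand side of the claimed convexity inequality, and that this expression extends continuously through any coincidences $\lambda_\mu = \lambda_\nu$.

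Next I would observe that the hypothesis \eqref{conditiononB} is exactly the statement $F_\mu B_{\mu\bar\mu} = 0$ for each $\mu$, since the parenthesised factor in \eqref{conditiononB} equals $\sigma_3 F_\mu$. Combined with the identity $F_{\mu\mu} = -2F_\mu/\lambda_\mu$, this immediately annihilates the pure diagonal terms of the Hessian: $F_{\mu\mu}B_{\mu\bar\mu}^2 = -(2/\lambda_\mu)(F_\mu B_{\mu\bar\mu})B_{\mu\bar\mu} = 0$. It therefore only remains to control the cross diagonal terms $F_{\mu\nu}B_{\mu\bar\mu}B_{\nu\bar\nu}$ with $\mu \neq \nu$. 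For such a term to be nonzero, both $B_{\mu\bar\mu}$ and $B_{\nu\bar\nu}$ must be nonzero, which forces both $\mu$ and $\nu$ to lie in the set $I := \{\kappa : F_\kappa = 0\} = \{\kappa : \lambda_\kappa = G/c_t\}$.

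The key rigidity step is to show $|I| \leq 1$ under the hypotheses of the lemma. Suppose for contradiction that $\lambda_\mu = \lambda_\nu = G/c_t$ for two indices. The equation $\tilde F(A) = 1$, which is $\sigma_3 = \sec^2(\hth)G$, then forces the remaining eigenvalue $\lambda_\kappa = \sec^2(\hth)c_t^2/G$, so $\lambda_\mu \lambda_\kappa = (G/c_t)\cdot \sec^2(\hth)c_t^2/G = c_t\sec^2(\hth)$, contradicting the strict cone inequality $\lambda_\mu\lambda_\kappa > c_t\sec^2(\hth)$ in \eqref{lambdasatisfy}. Hence $|I|\leq 1$, at most one $B_{\mu\bar\mu}$ is nonzero, and every diagonal-diagonal cross product vanishes identically.

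The main subtlety is precisely this rigidity argument: it is what allows an essentially ``concave'' inequality to be extracted despite the presence of the mixed-sign term $2t\tan(\hth)$, and it is where the strict ellipticity cone condition \eqref{lambdasatisfy} (preserved along the continuity path by Lemma \ref{ellipticitypreservation}) is exploited in an indispensable way. If the cone inequality were only weak, the $|I|=2$ scenario would be allowed and $F_{\mu\nu}$ in that degenerate case equals $-c_t^2/(\sigma_3 G)<0$, which would produce a genuinely negative contribution and the lemma would fail.
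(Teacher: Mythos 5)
Your formulas for $F$ as a function of eigenvalues, and for $F_\mu$, $F_{\mu\mu} = -2F_\mu/\lambda_\mu$, $F_{\mu\nu}$, and $(F_\mu - F_\nu)/(\lambda_\mu - \lambda_\nu) = G/(\sigma_3\lambda_\mu\lambda_\nu)$ are all correct, and your observation that the ``Newton'' off-diagonal part of the Hessian is \emph{exactly} the right-hand side of \eqref{convexeq} is a genuinely clean repackaging that the paper itself obtains after more computation. But the proof breaks at the point where you handle the pure-diagonal block of the Hessian.

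You read the hypothesis \eqref{conditiononB} as a family of constraints, ``$F_\mu B_{\mu\bar\mu} = 0$ for each $\mu$.'' That is not what it says: the index $\mu$ is repeated and summed (the paper uses Einstein summation throughout, cf.\ the remark following \eqref{firstderivative}), so \eqref{conditiononB} is the \emph{single} scalar equation $\sum_\mu F_\mu B_{\mu\bar\mu} = 0$. This also has to be so for the lemma to be applicable where it is used: the condition on $B_{\mu\bar\mu} = (\Omega_\phi)_{\mu\bar\mu,i}$ coming from the first-order maximum-point identity \eqref{simplifiedfirstderivativeatp} is a single scalar relation for each $i$, not one relation per $\mu$. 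With the correct (summed) reading there is no reason for any individual $F_\mu B_{\mu\bar\mu}$ to vanish, so the identity $F_{\mu\mu}B_{\mu\bar\mu}^2 = -(2/\lambda_\mu)(F_\mu B_{\mu\bar\mu})B_{\mu\bar\mu}$ does not kill the diagonal terms, the index set $I$ need not control which $B_{\mu\bar\mu}$ are nonzero, and your rigidity step (however correct in itself) is irrelevant.

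In fact the pure-diagonal quadratic $\sum_{\mu,\nu}F_{\mu\nu}B_{\mu\bar\mu}B_{\nu\bar\nu}$ does not vanish: after imposing the single scalar constraint it reduces (writing $v_\mu = B_{\mu\bar\mu}/\lambda_\mu$) to $\tfrac{G}{\sigma_3}\bigl(\sum_\mu v_\mu^2 - (\sum_\mu v_\mu)^2\bigr) = -\tfrac{2G}{\sigma_3}\sum_{\mu<\nu}v_\mu v_\nu$, whose sign is far from obvious. Showing that this is nonnegative on the constraint hyperplane is exactly the content of the paper's argument: one eliminates $v_3$ via the constraint, is left with a $2\times 2$ quadratic form $Ev_1^2 + Dv_2^2 + 2Bv_1v_2$, and must verify $E,D>0$ and $ED - B^2 \geq 0$, which after simplification becomes the inequality $c_t^2\sigma_2 + 2c_t t\tan(\hth)\sigma_1 + 3t^2\tan^2(\hth) \geq 0$ on the admissible cone -- a nontrivial constrained minimisation that uses the boundary behaviour and a Lagrange-multiplier analysis and is where the strict inequalities in \eqref{lambdasatisfy} actually enter. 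This step is entirely absent from your proposal, so the proof as written does not establish the lemma.
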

\begin{proof}
Since $$\frac{\partial F}{\partial A_{\mu \bar{\nu}}}=\frac{c_t \delta_{\mu \bar{\nu}}-(c_t \mathrm{tr}(A) +2t\tan(\hth))(A^{-1})_{\nu \bar{\mu}}}{\det(A)},$$ differentiating again we get
\begin{align}
\frac{\partial^2 F}{\partial A_{\mu \bar{\nu}}\partial A_{\alpha \bar{\beta}}}&= \frac{1}{\det(A)} \Bigg ( -\frac{c_t \delta_{\alpha \bar{\beta}}\delta_{\nu \bar{\mu}}}{\lambda_{\mu}}+\frac{(c_t \mathrm{tr}(A)+2t\tan(\hth))\delta_{\nu \bar{\alpha}}\delta_{\beta \bar{\mu}}}{\lambda_{\alpha}\lambda_{\mu}} \nonumber \\
&-\left(c_t \delta_{\mu \bar{\nu}}-(c_t \mathrm{tr}(A) +2t\tan(\hth))\frac{\delta_{\nu \bar{\mu}}}{\lambda_{\mu}}\right)\frac{\delta_{\beta \bar{\alpha}}}{\lambda_{\alpha}}\Bigg ). 
\end{align} 
Let $v_{\mu}=\frac{B_{\mu \bar{\mu}}}{\lambda_{\mu}} \in \mathbb{R}$. Now we compute the expression in \ref{convexeq}. 
\begin{align}
\frac{\partial^2 F}{\partial A_{\mu \bar{\nu}} A_{\alpha \bar{\beta}}} B_{\mu \bar{\nu}}B_{\alpha \bar{\beta}} &=\frac{1}{\det(A)}\displaystyle \sum_{\mu, \alpha} \Bigg (-c_t v_{\alpha}(\lambda_{\mu}+\lambda_{\alpha}) v_{\mu} \nonumber \\ 
&+ \left (c_t \sum_i \lambda_i +2t\tan(\hth)\right ) \left (\frac{\vert B_{\mu \bar{\alpha}} \vert^2}{\lambda_{\mu}\lambda_{\alpha}}+v_{\mu} v_{\alpha} \right )\Bigg ).
\end{align}
Therefore,
\begin{align}
\frac{\partial^2 F}{\partial A_{\mu \bar{\nu}} A_{\alpha \bar{\beta}}} B_{\mu \bar{\nu}}B_{\alpha \bar{\beta}}&\geq \frac{1}{\det(A)}\displaystyle \sum_{\mu, \alpha} \Bigg (-c_t v_{\alpha}(\lambda_{\mu}+\lambda_{\alpha}) v_{\mu} 
+ \left (c_t \sum_i \lambda_i +2t\tan(\hth)\right ) ( v_{\mu} v_{\alpha} +v_{\mu}^2)\Bigg ) \nonumber \\
&+\sum_{\mu \neq \alpha}\frac{c_t \sum_i \lambda_i +2t\tan(\hth)}{\det(A)} \frac{\vert B_{\mu \bar{\alpha}} \vert^2}{\lambda_{\mu}\lambda_{\alpha}} .
\label{computationofthestrongconvexexpression}
\end{align}
Using \ref{conditiononB} in \ref{computationofthestrongconvexexpression} we get the following inequality.
\begin{gather}
\frac{\partial^2 F}{\partial A_{\mu \bar{\nu}} A_{\alpha \bar{\beta}}} B_{\mu \bar{\nu}}B_{\alpha \bar{\beta}}  \geq \frac{1}{\det(A)}\displaystyle \left (-\sum_{\mu, \alpha}c_t v_{\mu} \lambda_{\alpha}v_{\alpha} + \left (c_t \sum_i \lambda_i +2t\tan(\hth)\right )\sum_{\mu}v_{\mu}^2 \right ) \nonumber \\
+\sum_{\mu \neq \alpha}\frac{c_t \sum_i \lambda_i +2t\tan(\hth)}{\det(A)} \frac{\vert B_{\mu \bar{\alpha}} \vert^2}{\lambda_{\mu}\lambda_{\alpha}}.
\label{convexitysimplifiedabit}
\end{gather}
Assume that $\lambda_3 \leq \lambda_2 \leq \lambda_1$. Solving for $v_3$ from \ref{conditiononB} and substituting in \ref{convexitysimplifiedabit} we get the following inequality.
\begin{gather}
\frac{\partial^2 F}{\partial A_{\mu \bar{\nu}} A_{\alpha \bar{\beta}}} B_{\mu \bar{\nu}}B_{\alpha \bar{\beta}} \geq\frac{Ev_1^2+Dv_2^2 +2Bv_1 v_2}{\det(A)(c_t(\lambda_1+\lambda_2)+2t\tan(\hth))} +\sum_{\mu \neq \alpha}\frac{c_t \sum_i \lambda_i +2t\tan(\hth)}{\det(A)} \frac{\vert B_{\mu \bar{\alpha}} \vert^2}{\lambda_{\mu}\lambda_{\alpha}},
\label{finalsimplificationofconvexity}
\end{gather}
where
\begin{align}
E&= 2\left(c_t (\lambda_2+\lambda_3)+2t\tan(\hth)\right) \left(c_t\left(\sum_i \lambda_i\right)+ +2t\tan(\hth) \right) >0\nonumber \\
D&= 2\left(c_t (\lambda_1+\lambda_3)+2t\tan(\hth)\right) \left(c_t\left(\sum_i \lambda_i\right)+ +2t\tan(\hth)\right) >0\nonumber \\
B&= 2 \left (c_t \lambda_3+t\tan(\hth)\right )  \left(c_t\left(\sum_i \lambda_i\right)+ +2t\tan(\hth) \right).
\label{valuesofADB}
\end{align}
Therefore, all we need to do is to prove that $ED-B^2 >0$. Upon computation we get the following.
\begin{align}
ED-B^2&=4\left(c_t\sum_i \lambda_i +2t\tan(\hth) \right )^2\nonumber \\
&\times \Bigg \{\left(c_t (\lambda_2+\lambda_3)+2t\tan(\hth)\right)\left(c_t (\lambda_1+\lambda_3)+2t\tan(\hth)\right)
- \left (c_t \lambda_3 +t\tan(\hth) \right )^2 \Bigg \}.
\label{ADminusBsquared}
\end{align}
Equation \ref{ADminusBsquared} can be simplified to the following.
\begin{align}
g&=\frac{ED-B^2}{4\left(c_t\sum_i \lambda_i +2t\tan(\hth) \right)^2}=c_t^2 \displaystyle \sum_{i<j} \lambda_i \lambda_j +2c_t t\tan(\hth)\sum_i \lambda_i +3t^2 \tan^2(\hth) \nonumber \\
&= (c_t \lambda_1 +2t \tan(\hth))(c_t(\lambda_2+\lambda_3)+2t\tan(\hth)) +c_t^2\lambda_2\lambda_3-t^2 \tan^2(\hth).
\label{ADminusBsquaredsimpl}
\end{align}
Since $\lambda_2\lambda_3 \geq c_t\sec^2(\hth)$ and $c_t^3 \sec^2(\hth)> t^2 \tan^2(\hth)$, we see that
\begin{align}
g&> (c_t \lambda_1 +2t \tan(\hth))(c_t(\lambda_2+\lambda_3)+2t\tan(\hth)).
\label{ggeq}
\end{align}
We can minimise $g$ subject to the constraints \ref{lambdasatisfy}. As one approaches the boundary of these constraints, one of the following two possibilities must occur along any convergent subsequence.
\begin{enumerate}
\item $\lambda_2 \lambda_3 \rightarrow c_t \sec^2(\hth)$ and $\lim (c_t(\lambda_2 + \lambda_3)+2t \tan(\hth))>0$: In this case, $\lambda_1\rightarrow \infty$ and hence $g\rightarrow \infty$ by \ref{ggeq}. 
\item $c_t(\lambda_2 + \lambda_3) \rightarrow -2t \tan(\hth)$:  If $\lambda_1 \rightarrow \infty$ then one can see from \ref{ggeq} that $g\geq 0$ in the limit. If $\lambda_1$ stays bounded, then \ref{ggeq} implies that $g\geq 0$ in the limit.
\item $\lambda_3 \rightarrow 0$ : In this case, $\lambda_1, \lambda_2 \rightarrow \infty$ and hence $g \rightarrow \infty$. 
\end{enumerate}
Therefore, if the extrema of $g$ in the interior of the region are $\geq 0$, then $g\geq 0$ on the region. Using Lagrange's multipliers it is easy to see that the local extrema of $g$ occur when $\lambda_1=\lambda_2=\lambda_3=\lambda$. At such a point,
\begin{gather}
g=3c_t^2 \lambda^2 +6c_t t\tan(\hth) \lambda + 3t^2 \tan^2 (\hth) = (\sqrt{3}c_t \lambda+\sqrt{3}t\tan(\hth))^2 \geq 0.
\end{gather}
Hence, 
\begin{gather}
\frac{\partial^2 F}{\partial A_{\mu \bar{\nu}} A_{\alpha \bar{\beta}}} B_{\mu \bar{\nu}}B_{\alpha \bar{\beta}}\geq \frac{c_t \sum_i \lambda_i +2t\tan(\hth)}{\det(A)} \sum_{\alpha \neq \mu}\frac{\vert B_{\mu \bar{\alpha}} \vert^2}{\lambda_{\mu}\lambda_{\alpha}}.
\end{gather}
\end{proof}

\begin{proposition}
Along \ref{continuitypath}, $\Vert \Delta \phi \Vert_{C^0} \leq C_2(1+\Vert \nabla \phi \Vert_{C^0}^2)$ for some $C_2$ independent of $t$.
\label{laplacianestimateprop}
\end{proposition}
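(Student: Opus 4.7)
The plan is to apply the maximum principle to the test function
$$ Q := \log \lambda_1 - A\phi, $$
where $\lambda_1$ denotes the largest eigenvalue of the endomorphism $A^i_j := \omega^{i\bar k}(\Omega_\phi)_{j\bar k}$ and $A > 0$ is a large constant to be fixed. Since the cone condition (\ref{lambdasatisfy}) forces $\lambda_2, \lambda_3 \geq \sqrt{c_t}|\sec(\hth)|$, an upper bound on $\lambda_1$ at a maximum point is equivalent to the desired Laplacian bound. Let $p_0$ be a maximum of $Q$; after a standard Siu-type perturbation that ensures $\lambda_1$ is simple at $p_0$, I would work in $\omega$-normal coordinates with a unitary frame diagonalising $\Omega_\phi(p_0)$ with eigenvalues $\lambda_1 \geq \lambda_2 \geq \lambda_3$.

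For the maximum principle I use the positive-coefficient linearised operator $L u := \sum_j \Phi^{j\bar j} u_{j\bar j}$, where $\Phi^{j\bar j} := \lambda_k \lambda_l - c_t \sec^2(\hth) > 0$ for $\{j,k,l\} = \{1,2,3\}$, as in Lemma \ref{ellipticitypreservation}. A direct computation shows $\Phi^{j\bar j} = -\det(A) \sec^2(\hth) F^{j\bar j}$, so every conclusion about $F$ translates, up to a positive scalar, to a statement about $\Phi$-weighted sums. Expanding $L(\log \lambda_1)$ with the standard eigenvalue perturbation formulas for a simple $\lambda_1$ gives $(\lambda_1)_i = (\Omega_\phi)_{1\bar 1, i}$ together with
$$ (\lambda_1)_{i\bar i} = (\Omega_\phi)_{1\bar 1, i\bar i} + \sum_{k > 1} \frac{|(\Omega_\phi)_{1\bar k, i}|^2 + |(\Omega_\phi)_{k\bar 1, i}|^2}{\lambda_1 - \lambda_k}, $$
and the K\"ahler commutation identities on $(X,\omega)$ rewrite $(\Omega_\phi)_{1\bar 1, i\bar i}$ as $(\Omega_\phi)_{i\bar i, 1\bar 1}$ plus a curvature-of-$\omega$ error of order $O(\lambda_1)$.

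The crucial structural step is to invoke Lemma \ref{convexitylem}. Differentiating \ref{continuitypathintermsofF} once in the $\partial_1$ direction shows that $B_{\mu\bar\nu} := (\Omega_\phi)_{\mu\bar\nu, 1}$ satisfies the tangent condition \ref{conditiononB}; differentiating a second time yields
$$ F^{i\bar j} (\Omega_\phi)_{i\bar j, 1\bar 1} = -\frac{\partial^2 F}{\partial A_{\mu\bar\nu} \partial A_{\alpha\bar\beta}} B_{\mu\bar\nu} \overline{B_{\alpha\bar\beta}}, $$
which Lemma \ref{convexitylem} bounds above by $-\frac{c_t \sum_i \lambda_i + 2t \tan(\hth)}{\det A} \sum_{\mu \neq \alpha} \frac{|B_{\mu\bar\alpha}|^2}{\lambda_\mu \lambda_\alpha}$. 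This is exactly the negative off-diagonal third-derivative contribution required to absorb the positive eigenvalue-gap sum in $(\lambda_1)_{i\bar i}$, once the scalar relating $F^{i\bar j}$ to $\Phi^{i\bar j}$ is tracked. Plugging everything into $L(Q) \leq 0$ at $p_0$ and using the gradient-vanishing relation $(\lambda_1)_i/\lambda_1 = A \phi_i$, I expect to end with an inequality in which the surviving error terms are of order $A^2 \sum_j \Phi^{j\bar j} |\phi_j|^2$, curvature contributions of order $\lambda_1 \sum_j \Phi^{j\bar j}$, and the combination $-A L(\phi) = -A \bigl(\sum_j \lambda_j \Phi^{j\bar j} - \Phi^{j\bar j} \Omega_{j\bar j}\bigr)$. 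A direct computation using \ref{continuitypath} gives $\sum_j \lambda_j \Phi^{j\bar j} = 2 \sec^2(\hth) (c_t \sum_i \lambda_i + 3 t \tan(\hth))$, which for large $\lambda_1$ is comparable to $+\lambda_1$; choosing $A$ large enough for the leading positive part of $-AL(\phi)$ to dominate the curvature and gradient-squared errors then forces $\lambda_1(p_0) \leq C(1 + |\nabla \phi(p_0)|^2)$, and the global bound follows from the $C^0$ estimate of Proposition \ref{coestimate}.

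The main obstacle is the mixed-sign nature of the equation: neither $F^{i\bar j}$ nor $\sum_i \lambda_i F^{i\bar i}$ is sign-definite along the continuity path, and the convexity in Lemma \ref{convexitylem} holds only along directions tangent to the level set. The delicate point is to verify that the strict lower bound $c_t^3 > t^2 \sin^2(\hth)$ of Lemma \ref{usefulpropsofct}(2), which already powered Lemmas \ref{ellipticitypreservation} and \ref{convexitylem}, remains strong enough to ensure that the convexity-lemma contribution dominates the eigenvalue-gap garbage even when $\tan(\hth)$ is very negative. This is morally why, in contrast to prior works such as \cite{gchen, zheng}, the ``bad'' sign piece does not need to be quantifiably small.
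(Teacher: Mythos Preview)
Your overall architecture matches the paper's: a maximum principle applied to a function of the form $\log\lambda_1$ plus a multiple of $\phi$, with Lemma~\ref{convexitylem} supplying the needed third-order control after twice differentiating $\tilde F(A)=1$. The paper uses the slightly more elaborate test function $\psi = -\gamma(\phi) + \tfrac12\ln(1+\lambda_1^2)$ with $\gamma''<0$, but this refinement is inessential; your simpler $Q=\log\lambda_1 - A\phi$ would serve. There are, however, two genuine gaps.

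First, you misidentify what Lemma~\ref{convexitylem} is used to absorb. The eigenvalue-gap sum $\sum_{k>1}\frac{|(\Omega_\phi)_{1\bar k,i}|^2+|(\Omega_\phi)_{k\bar1,i}|^2}{\lambda_1-\lambda_k}$ is already \emph{positive} and can simply be dropped. The dangerous negative terms are the $-\Phi^{\mu\bar\mu}\,|(\lambda_1)_\mu|^2/\lambda_1^2$ coming from the logarithm. In the paper the off-diagonal piece of the convexity lower bound (specifically the $|(\Omega_\phi)_{\mu\bar1,1}|^2/(\lambda_\mu\lambda_1)$ terms, combined with $(\Omega_\phi)_{\mu\bar1,1}\approx(\Omega_\phi)_{1\bar1,\mu}$) is used to cancel the $\mu>1$ parts of that negative contribution. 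What survives is only the $\mu=1$ piece, of size $A^2\,\Phi^{1\bar1}\,|\phi_1|^2$, and here the coefficient $\Phi^{1\bar1}=\lambda_2\lambda_3-c_t\sec^2(\hth)$ is tending to zero. Your stated error ``$A^2\sum_j\Phi^{j\bar j}|\phi_j|^2$'' has $\Phi^{2\bar2},\Phi^{3\bar3}\sim\lambda_1$ and would be uncontrollable.

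Second, and more seriously, you never invoke the subsolution hypothesis on $\Omega$, and without it the argument collapses at the step ``choosing $A$ large enough for the leading positive part of $-AL(\phi)$ to dominate.'' Writing $-L(\phi)=\sum_j\Phi^{j\bar j}(\Omega_{j\bar j}-\lambda_j)$, you need this to exceed $\kappa\sum_j\Phi^{j\bar j}$ for some uniform $\kappa>0$ when $\lambda_1$ is large. Asymptotically this amounts to $\lambda_3\Omega_{2\bar2}+\lambda_2\Omega_{3\bar3}>2c_t\sec^2(\hth)$ with a definite margin; but since $\lambda_2\lambda_3\to c_t\sec^2(\hth)$, the AM--GM inequality gives only $\lambda_3\Omega_{2\bar2}+\lambda_2\Omega_{3\bar3}\ge 2\sqrt{\lambda_2\lambda_3\,\Omega_{2\bar2}\Omega_{3\bar3}}$, and one needs $\Omega_{2\bar2}\Omega_{3\bar3}>c_t\sec^2(\hth)$ \emph{strictly} --- this is exactly the cone condition $\Omega^2>\sec^2(\hth)\,\omega^2$ (indeed $(\Omega-\kappa\omega)^2\ge c_t\sec^2(\hth)\,\omega^2$). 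In the paper this is isolated as a separate lemma (the statement $\sum_\mu\frac{\partial F}{\partial\lambda_\mu}\phi_{,\mu\bar\mu}\ge\kappa\sum_\mu(-\frac{\partial F}{\partial\lambda_\mu})$ for $\lambda_1$ large); it is not a consequence of Lemma~\ref{usefulpropsofct}(2), and it is the only place in the Laplacian estimate where the subsolution enters.
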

\begin{proof}
Suppose not. Then there is a sequence of times $t_n \rightarrow t$ such that $$\Vert \Delta \phi_{n} \Vert_{C^0} \geq n (1+\Vert \nabla \phi_n \Vert_{C^0}^2).$$ Since $\Delta \phi_n \geq -3$, we see that $\max \Delta \phi_n \rightarrow \infty$. Denote the eigenvalues of the endomorphism $ A_{i,n} ^j (p)= (\Omega_{\phi_n})_{i\bar{k}}\omega^{i\bar{k}} (p)$ as $\lambda_{1,n}(p) \geq \lambda_{2,n}(p) \geq \lambda_{3,n}(p)$. Let $f_n$ be any sequence of functions that are uniformly bounded independent of $n$. Suppose the maximum of the function $\hat{\psi}_n = f_n + \frac{1}{2} \ln (1+\lambda_{1,n}^2)$ is attained at $p_n$. If $\lambda_{1,n} (p_n)$ remains bounded (independent of $n$), then so does $\hat{\psi}_n (p_n)$ and hence so does $\lambda_{1,n}$. However, $3\lambda_{1,n} \geq \max (\Delta \phi_n) \rightarrow \infty$. Therefore, 
\begin{gather}
\lambda_{1,n}(p_n) \rightarrow \infty.
\label{lamba1infty}
\end{gather}
We suppress the point $p_n$ for the remainder of this discussion. Since $$\lambda_{3,n}\lambda_{2,n} = \lambda_{2,n}\sec^2(\hth)\frac{c_{t_n}(\lambda_{1,n}+\lambda_{2,n})+2t_n\tan(\hth)}{\lambda_{1,n}\lambda_{2,n}-c_{t,n}\sec^2(\hth)}$$ and $$\lambda_{2,n}^2 \geq \lambda_{2,n}\lambda_{3,n}\geq c_{t_n} \sec^2(\hth),$$ we see that 
\begin{gather}
c_t \sec^2(\hth) \leq \liminf \lambda_{2,n} \lambda_{3,n}  \leq \limsup \lambda_{2,n} \lambda_{3,n} \leq 2 c_t \sec^2(\hth).
\label{lambda2lambda3}
\end{gather}
We drop the  subscript $n$ in what follows. Unless specified otherwise, the constant $C$ changes from line to line for the remainder of the paper.\\
\indent Define the function $$\psi=-\gamma(\phi)+\frac{1}{2} \ln (1+\lambda_1^2),$$ where $\gamma$ is a function defined as follows. 
\begin{gather}
\gamma (t) = 2A(t+C_0) - \frac{A\tau}{2}(t+C_0)^2, \ -C_0 \leq t \leq C_0.
\label{defofgammandpsi}
\end{gather}
where  $A>>1, \frac{2}{C_0}> \tau>0$ are constants to be determined later. In particular, $\tau$ is chosen to be small enough so that $\gamma>0, \gamma' \in [A,2A]$. Note that our $\psi$ is quite similar to (but simpler than) the corresponding function in the proof of Theorem $4.1$ in \cite{collinsjacobyau}. However, the estimates obtained will be somewhat different. \\
\indent Clearly $\psi$ is a continuous function and hence attains a maximum at a point $p$. Choose normal coordinates for $\omega$ at $p$ and furthermore, choose them to diagonalise $\Omega_{\phi}$ to $\Omega_{\phi}= \displaystyle \sum_i \lambda_i \sqrt{-1}dz^i \wedge d\bar{z}^i$, where $\lambda_3 \leq \lambda_2 \leq \lambda_1$. Near $p$, $\lambda_1$ need not vary smoothly. The standard remedy for this problem is to consider a small perturbation, i.e., a constant diagonal matrix $B$ such that $0=B_{11}>B_{22}>B_{33}$. Define a new matrix $$\tilde{A}^i_{j}=(\Omega_{\phi})_{j\bar{k}}\omega^{i\bar{k}}-B.$$
$B$ is chosen to be so small that $\tilde{A}>0.95A>0$ near $p$. Clearly the largest eigenvalue $\tilde{\lambda}_1$ of $\tilde{A}$ varies smoothly near $p$, and $\tilde{\psi}=-\gamma(\phi)+\frac{1}{2}\ln(1+\tilde{\lambda}_1^2)$ continues to acheive a local maximum at $p$. \\  
Differentiating \ref{continuitypathintermsofF} with respect to $z^i$ we get the following.
\begin{gather}
0=\frac{\partial F}{\partial A_{\mu \bar{\nu}}} ((\Omega_{\phi})_{\mu \bar{\beta}}\omega^{\bar{\beta}\nu})_{,i},
\label{firstderivative}
\end{gather}
where we used the Einstein summation notation. Noting that $\omega_{,i}(p)=0$, we get
\begin{gather}
\frac{\partial F}{\partial A_{\mu \bar{\nu}}} (\Omega_{\phi})_{\mu \bar{\nu},i}=0, \nonumber
\end{gather}
which implies
\begin{align}
 \displaystyle \frac{\partial}{\partial A_{\mu \bar{\nu}}}\vert_{[A]=diag(\lambda_i)} \left(c_t\frac{\mathrm{tr}(A)}{\det(A)}+2t \tan(\hth) \frac{1}{\det(A)}\right )(\Omega_{\phi})_{\mu \bar{\nu},i} &= 0 \nonumber \\
\Rightarrow \frac{1}{\det(A)}\left (c_t-\frac{c_t \sum_i \lambda_i +2t\tan(\hth)}{\lambda_{\mu}} \right ) (\Omega_{\phi})_{\mu \bar{\mu},i}&=0.
\label{simplifiedfirstderivativeatp}
\end{align}
\begin{remark}
Note that the term in the parantheses in the last equation of \ref{simplifiedfirstderivativeatp} is positive because ellipticity is preserved along the continuity path.
\label{aremarkaboutellipticity}
\end{remark}
 Returning back to Equation \ref{firstderivative}, differentiating it again at $p$ with respect to $\bar{z}^i$, and summing over $i$, we get the following in normal coordinates.
\begin{align}
0&=\sum_{i}\left (\sum_{\mu}\frac{\partial F}{\partial \lambda_{\mu}}\lambda_{\mu} R^{\mu \bar{\nu}}_{i\bar{i}} +\sum_{\mu}\frac{\partial F}{\partial \lambda_{\mu}} (\Omega_{\phi})_{\mu \bar{\mu},i\bar{i}}+\frac{\partial^2 F}{\partial A_{\mu \bar{\nu}} A_{\alpha \bar{\beta}}} (\Omega_{\phi})_{\mu \bar{\nu},i}(\Omega_{\phi})_{\alpha \bar{\beta},\bar{i}} \right).
\label{secondderivativesimplified}
\end{align}
Using Lemma \ref{convexitylem} in \ref{secondderivativesimplified} we get the following.
\begin{gather}
0\geq \displaystyle \sum_{i,\mu}\Bigg ( \frac{\partial F}{\partial \lambda_{\mu}}\lambda_{\mu} R^{\mu \bar{\mu}}_{i\bar{i}} +\frac{\partial F}{\partial \lambda_{\mu}} (\Omega_{\phi})_{\mu \bar{\mu},i\bar{i}}\Bigg ) + \frac{c_t \sum_j \lambda_j +2t\tan(\hth)}{\det(A)} \sum_i \sum_{\alpha \neq \mu}\frac{\vert (\Omega_{\phi})_{\mu \bar{\alpha},i} \vert^2}{\lambda_{\mu}\lambda_{\alpha}} \nonumber \\
\Rightarrow \sum_{i,\mu} \left (-\frac{\partial F}{\partial \lambda_{\mu}} \right)  (\Omega_{\phi})_{\mu \bar{\mu},i\bar{i}} \geq  \sum_{i,\mu} \frac{\partial F}{\partial \lambda_{\mu}}\lambda_{\mu} R^{\mu \bar{\mu}}_{i\bar{i}}  + \frac{c_t \sum_j \lambda_j +2t\tan(\hth)}{\det(A)} \sum_i \sum_{\alpha \neq \mu}\frac{\vert (\Omega_{\phi})_{\mu \bar{\alpha},i} \vert^2}{\lambda_{\mu}\lambda_{\alpha}}.
\label{secondderivfinal}
\end{gather}
At the local maximum $p$ of $\tilde{\psi}$, the following hold.
\begin{align}
0=\tilde{\psi}_{,\mu}&= -\gamma' \phi_{,\mu}+\frac{\tilde{\lambda}_1}{1+\tilde{\lambda}_1 ^2} (\Omega_{\phi})_{1\bar{1},\mu}, \ \mathrm{and}\nonumber \\
0\geq \sum_{\mu} \left ( -\frac{\partial F}{\partial \lambda_{\mu}}(A)\right )\tilde{\psi}_{,\mu \bar{\mu}} &= \sum_{\mu} \left ( -\frac{\partial F}{\partial \lambda_{\mu}}\right ) \Bigg ( \left ( \frac{\tilde{\lambda}_1}{1+\tilde{\lambda}_1 ^2} \tilde{\lambda}_{1,\mu}\right )_{,\bar{\mu}} -(\gamma' \phi_{,\mu})_{,\bar{\mu}}\Bigg )\nonumber \\
&\geq A+B.
\label{conditionsonthederivativesofpsiatthemaximum}
\end{align}
Now we calculate $A$.
\begin{align}
A&= \sum_{\mu} \left ( -\frac{\partial F}{\partial \lambda_{\mu}}\right ) \Bigg (\frac{1-\tilde{\lambda}_1^2}{(1+\tilde{\lambda}_1^2)^2}\vert \tilde{\lambda}_{1,\mu} \vert^2 +  \frac{\tilde{\lambda}_1}{1+\tilde{\lambda}_1 ^2} \tilde{\lambda}_{1,\mu \bar{\mu}}\Bigg ) \nonumber \\
&= \sum_{\mu} \left ( -\frac{\partial F}{\partial \lambda_{\mu}}\right ) \Bigg (\frac{1-\tilde{\lambda}_1^2}{(1+\tilde{\lambda}_1^2)^2}\vert (\Omega_{\phi})_{1\bar{1},\mu} \vert^2 \nonumber \\
&+  \frac{\tilde{\lambda}_1}{1+\tilde{\lambda}_1 ^2}  \Bigg [R^{1\bar{1}}_{\mu \bar{\mu}}\lambda_1+(\Omega_{\phi})_{\mu \bar{\mu},1\bar{1}}+\sum_{q>1}\frac{\vert(\Omega_{\phi})_{q\bar{1},\mu}\vert^2+\vert(\Omega_{\phi})_{1\bar{q},\mu}\vert^2}{\lambda_1-\tilde{\lambda}_q} \Bigg ]\Bigg ) \nonumber \\
&\geq \sum_{\mu} \left ( -\frac{\partial F}{\partial \lambda_{\mu}}\right ) \Bigg (\frac{1-\lambda_1^2}{(1+\lambda_1^2)^2}\vert (\Omega_{\phi})_{1\bar{1},\mu} \vert^2 
-  C + \frac{\lambda_1}{1+\lambda_1 ^2}(\Omega_{\phi})_{\mu \bar{\mu},1\bar{1}} \Bigg ),
\label{equationforA}
\end{align}
where the second-to-last line follows from Equation 4.4 in \cite{collinsjacobyau}. Now we use \ref{secondderivfinal} in \ref{equationforA} to get the following.
\begin{align}
A&\geq \sum_{\mu} \left ( -\frac{\partial F}{\partial \lambda_{\mu}}\right ) \Bigg (\frac{1-\lambda_1^2}{(1+\lambda_1^2)^2}\vert (\Omega_{\phi})_{1\bar{1},\mu} \vert^2 
-  C -\frac{C\lambda_1\lambda_{\mu}}{1+\lambda_1 ^2}\Bigg )\nonumber \\
&+ \frac{c_t \sum_j \lambda_j +2t\tan(\hth)}{(1+\lambda_1^2)\lambda_1\lambda_2 \lambda_3}  \sum_{1< \mu}\frac{\vert (\Omega_{\phi})_{\mu \bar{1},1} \vert^2}{\lambda_{\mu}} \nonumber \\
&\geq \left ( -\frac{\partial F}{\partial \lambda_{1}}\right ) \Bigg (-(\gamma')^2 \vert \phi_{,1} \vert^2 -  C \Bigg ) + \sum_{\mu>1} \Bigg [\left ( -\frac{\partial F}{\partial \lambda_{\mu}}\right ) \left ( 
-  C -\frac{C\lambda_1\lambda_{\mu}}{1+\lambda_1 ^2}\right ) \nonumber \\
&+ \frac{\vert (\Omega_{\phi})_{1 \bar{1},\bar{\mu}} \vert^2}{1+\lambda_1^2}\left (\left ( -\frac{\partial F}{\partial \lambda_{\mu}}\right ) \frac{1-\lambda_1^2}{1+\lambda_1^2}+ \frac{c_t \sum_j \lambda_j +2t\tan(\hth)}{\lambda_{\mu}\lambda_1\lambda_2 \lambda_3} \right )  \Bigg ].
\label{simplifiedinequalityonthesecondderivativeofpsiatthemaximum}
\end{align}
Using the expression for $\frac{\partial F}{\lambda_{\mu}}$ from \ref{simplifiedfirstderivativeatp} we get the following.
\begin{align}
A&\geq \left ( -\frac{\partial F}{\partial \lambda_{1}}\right ) \Bigg (-(\gamma')^2 \vert \phi_{,1} \vert^2 -  C \Bigg ) + \sum_{\mu>1} \Bigg [\left ( -\frac{\partial F}{\partial \lambda_{\mu}}\right ) \left ( 
-  C -\frac{C\lambda_1\lambda_{\mu}}{1+\lambda_1 ^2}\right ) \nonumber \\
&+ \frac{\vert (\Omega_{\phi})_{1 \bar{1},\bar{\mu}} \vert^2}{(1+\lambda_1^2)\lambda_1 \lambda_2 \lambda_3}\left (c_t\frac{\lambda_1^2-1}{\lambda_1^2+1}+ \frac{2 \lambda_1 ^2 (c_t \sum_j \lambda_j +2t\tan(\hth))}{\lambda_{\mu}(1+\lambda_1^2)} \right )  \Bigg ] \nonumber \\
&\geq \left ( -\frac{\partial F}{\partial \lambda_{1}}\right ) \Bigg (-(\gamma')^2 \vert \phi_{,1} \vert^2 -  C \Bigg ) + \sum_{\mu>1} \left ( -\frac{\partial F}{\partial \lambda_{\mu}}\right ) \left ( 
-  C -\frac{C\lambda_1\lambda_{\mu}}{1+\lambda_1 ^2}\right ).
\label{furthersimplifiedinequalityonthesecondderivativeofpsiatthemaximum}
\end{align}
At this juncture, we calculate a lower bound on $\frac{\partial F}{\partial \lambda_{\mu}}\lambda_{\mu}$.
\begin{align}
\frac{\partial F}{\partial \lambda_{\mu}}\lambda_{\mu} &= \frac{c_t \lambda_{\mu} - \sum_i c_t \lambda_i -2t\tan(\hth)}{\lambda_1 \lambda_2 \lambda_3} \nonumber \\
&=-\frac{\sum_{i\neq \mu} c_t \lambda_i +2t\tan(\hth)}{\sec^2(\hth) \left(c_t\sum_i\lambda_i+2t\tan(\hth) \right )} \nonumber \\
&\geq - \cos^2(\hth).
\label{lowerboundonEuler}
\end{align}
Using \ref{lowerboundonEuler} in \ref{furthersimplifiedinequalityonthesecondderivativeofpsiatthemaximum} we get the following.
\begin{align}
A&\geq \frac{\partial F}{\partial \lambda_{1}}(\gamma')^2 \vert \phi_{,1} \vert^2 +C \left(\sum_{\mu} \frac{\partial F}{\partial \lambda_{\mu}} - \frac{1}{\lambda_1} \right).
\label{finalsymplificationonA} 
\end{align}
We now shift our attention to $B$.
\begin{align}
B &= \sum_{\mu}\frac{\partial F}{\partial \lambda_{\mu}} \left (\gamma'' \vert \phi_{,\mu} \vert^2+ \gamma' \phi_{,\mu \bar{\mu}}\right ).
\end{align}
Noting that $\gamma'' = -A\tau<0$, we see that
\begin{align}
B &\geq \sum_{\mu} \frac{\partial F}{\partial \lambda_{\mu}} \gamma' \phi_{,\mu \bar{\mu}}.
\label{simplifiedinequalityforB}
\end{align}
At this point we need the following lemma which is standard in these sorts of problems.
\begin{lemma}
There exists a uniform $N>0$ such that if $\lambda_1 >N$, then there exists a uniform constant $\kappa>0$ such that 
\begin{gather}
\sum_{\mu} \frac{\partial F}{\partial \lambda_{\mu}} \phi_{,\mu \bar{\mu}} \geq \kappa \sum_{\mu} -\frac{\partial F}{\partial \lambda_{\mu}}.
\label{theweisunlemmaequation}
\end{gather}
\label{theweisunlemma}
\end{lemma}
\begin{proof}
Choose $\kappa>0$ to be small enough so that $\Omega-\kappa \omega>\kappa \omega>0$.
\begin{gather}
(\Omega-\kappa \omega)^2 > c_t\sec^2(\hth) \omega^2.
\label{kappaOmega}
\end{gather}
Note that $\phi_{,\mu\bar{\mu}}=\lambda_{\mu}-\Omega_{\mu \bar{\mu}}$, and that $\lambda_1 \rightarrow \infty$. Also note that
\begin{gather}
\sum_{\mu} \frac{\partial F}{\partial \lambda_{\mu}} (\phi_{,\mu \bar{\mu}}+\kappa)=-\sum_{\mu}\frac{(\sum_{i\neq \mu}c_t\lambda_i +2t\tan(\hth)) (\lambda_{\mu}+\kappa-\Omega_{\mu \bar{\mu}}	)}{\lambda_{\mu}\sec^2(\hth)(c_t\sum_i \lambda_i+2t\tan(\hth))}.
\label{eq:useful}
\end{gather}
For every convergent subsequence, there are two possibilites (we abuse notation and denote the limits of $\lambda_i$ by the same letter):
\begin{enumerate}
\item $\lambda_2$ stays bounded:  We see that 
\begin{align}
\sum_{\mu} \frac{\partial F}{\partial \lambda_{\mu}} (\phi_{,\mu \bar{\mu}}+\kappa) &\rightarrow  -\cos^2(\hth)\frac{ (\lambda_2+\kappa-\Omega_{2\bar{2}})}{\lambda_2} - \cos^2(\hth)\frac{c_t (\lambda_3+\kappa-\Omega_{3\bar{3}})}{\lambda_3} \nonumber \\
&=\cos^2(\hth)\left (-2+\frac{\Omega_{2\bar{2}}-\kappa}{\lambda_2}+ \frac{\Omega_{3\bar{3}}-\kappa}{\lambda_3} \right ) \nonumber \\
&\geq 2\cos^2(\hth) \left ( -1 + \sqrt{\frac{(\Omega_{2\bar{2}}-\kappa)(\Omega_{3\bar{3}}-\kappa)}{\lambda_2\lambda_3}} \right )
\label{prelimcalcoflhsinweisun}
\end{align}
Using \ref{kappaOmega} in \ref{prelimcalcoflhsinweisun}, if $\lambda_1$ is sufficiently large, 
\begin{align}
\sum_{\mu} \frac{\partial F}{\partial \lambda_{\mu}} (\phi_{,\mu \bar{\mu}}+\kappa) &> \cos^2(\hth)(-1+1)=0,
\end{align}
\item $\lambda_2\rightarrow \infty$: In this case, the expression for $\lambda_3$ shows that it goes to $0$. The first two terms of \ref{eq:useful} are bounded above whereas the third goes to $\infty$. 
\end{enumerate}
In either case, we are done.
\end{proof}
Using \ref{conditionsonthederivativesofpsiatthemaximum}, \ref{finalsymplificationonA}, \ref{simplifiedinequalityforB}, and Lemma \ref{theweisunlemma}, we obtain the following inequality.
\begin{align}
0&\geq \frac{\partial F}{\partial \lambda_{1}}(\gamma')^2 \vert \phi_{,1} \vert^2 +C \left(\sum_{\mu} \frac{\partial F}{\partial \lambda_{\mu}} - \frac{1}{\lambda_1} \right)+ \gamma' \kappa\sum_{\mu} -\frac{\partial F}{\partial \lambda_{\mu}}\nonumber \\
&\geq \frac{\partial F}{\partial \lambda_{1}}4A^2 \vert \phi_{,1} \vert^2  - \frac{C}{\lambda_1} + \frac{A \kappa}{2}\sum_{\mu} -\frac{\partial F}{\partial \lambda_{\mu}},
\end{align}
where we chose $A$ to be sufficiently large and used the fact that $2A\geq\gamma' \geq A$. Therefore,
\begin{align}
0&\geq -\frac{4A^2\vert \phi_{,1} \vert^2 \cos^2(\hth)}{\lambda_1}\frac{c_t(\lambda_2+\lambda_3)+2t\tan(\hth)}{c_t \sum_i \lambda_i + 2t\tan(\hth)}-\frac{C}{\lambda_1} \nonumber \\
&+ \frac{A\kappa \cos^2(\hth)}{2}\sum_i \frac{c_t\sum_{j\neq i}\lambda_j+2t\tan(\hth)}{\lambda_i(c_t\sum_{k} \lambda_k+2t\tan(\hth))} \nonumber\\
\Rightarrow \frac{C(1+\vert \phi_{,1} \vert^2)}{\lambda_1}&\geq \frac{A\kappa \cos^2(\hth)}{\sqrt{\lambda_2\lambda_3}}\left(\frac{(c_t(\lambda_1+\lambda_3)+2t\tan(\hth))(c_t(\lambda_1+\lambda_2)+2t\tan(\hth))}{(c_t\sum_k \lambda_k +2t\tan(\hth))^2} \right)^{1/2}.
\label{eq:almostfinal}
\end{align} 
Using \ref{lambda2lambda3} we see that the right-hand-side of \ref{eq:almostfinal} is bounded below. Therefore, at the maximum of $\psi$, $$\lambda_1 \leq C(1+\max \vert \nabla \phi \vert^2).$$ Since $\vert \phi \vert$ is bounded, this bound  translates into the desired bound on the Laplacian of $\phi$.
\end{proof}
At this juncture, Proposition 5.1 of \cite{collinsjacobyau} (also see the blow-up argument in \cite{dinewkol}) shows that $$\Vert \Delta \phi \Vert_{C^0} \leq \tilde{C}_2,$$
where $\tilde{C}_2$ is independent of $t$. It is also easily to see that this Laplacian bound implies a bound on the complex Hessian $\spbp \phi$ and that the operator $F$ is uniformly elliptic.
\section{Higher order estimates and conclusion of the proof}\label{concludingsec}
\indent Since $F$ is convex only when restricted to a level set, the (complex version of) standard Evans-Krylov theorem is not applicable directly. Typically, one uses an auxilliary function $G$ such that $G\circ F$ satisfies the conditions of the Evans-Krylov theorem \cite{collinsjacobyau}. However, since it is not clear whether convexity and ellipticity hold on general level sets, we need a slightly different technique to prove $C^{2,\alpha}$ estimates.
\begin{proposition}
Along the continuity path \ref{continuitypathintermsofF}, $\Vert \phi \Vert_{C^{2,\alpha}}\leq C_3$ where $C_3$ is independent of $t$.
\label{EvansKrylovprop}
\end{proposition}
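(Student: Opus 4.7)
The strategy is to adapt the Evans-Krylov argument using the level-set convexity of $F$ supplied by Lemma \ref{convexitylem}, thereby bypassing the need to find a concavifier $G$ with $G \circ \tilde{F}$ globally concave, whose existence appears obstructed by the mixed-sign structure in \ref{continuitypath}. As input one uses Proposition \ref{coestimate}, Proposition \ref{laplacianestimateprop}, and the remark following the latter, which together give a uniform bound on $\spbp \phi$. Consequently the endomorphism $A$ takes values in a compact subset of the open cone defined by \ref{lambdasatisfy}; on that subset $\tilde{F}$ is smooth and uniformly elliptic, with constants independent of $t$. The task therefore reduces to an interior $C^{2,\alpha}$ estimate for the fully nonlinear equation $\tilde{F}(A) = 1$.

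At a point $p$, choose normal coordinates for $\omega$ diagonalising $A$, and differentiate $\tilde{F}(A) = 1$ twice along an arbitrary real direction $e$. Combining the two resulting identities yields $\sum_\alpha F_\alpha \partial_{ee} A_\alpha = -\sum_{\alpha,\beta} F_{\alpha\beta}\, \partial_e A_\alpha\, \partial_e A_\beta$, and the first-order identity is precisely condition \ref{conditiononB} imposed on $B := \partial_e A$, exactly as in the derivation \ref{simplifiedfirstderivativeatp} of the openness section. Lemma \ref{convexitylem} then supplies a nonnegative lower bound for the right-hand side, so writing $L = \sum_\alpha F_\alpha \partial_\alpha$ for the linearised operator and absorbing the tensorial errors arising from the $x$-dependence of $\omega$ and $\Omega$ into the off-diagonal positivity $\sum_{\mu \neq \alpha} |B_{\mu \bar\alpha}|^2/(\lambda_\mu \lambda_\alpha)$ via Cauchy-Schwarz, one obtains a one-sided bound $L(\partial_{ee}\phi) \leq C$, where $C$ depends only on the previously established estimates.

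With $L$ uniformly elliptic on the compact range of $A$, and with a one-sided bound on $L(\partial_{ee}\phi)$ for every unit direction $e$, the Krylov-Safonov weak Harnack inequality applied in the manner of the classical Evans-Krylov argument yields a uniform $C^{0,\alpha}$ estimate for the full Hessian, i.e., $\Vert \phi \Vert_{C^{2,\alpha}} \leq C_3$. Once this estimate is in hand, the equation becomes a linear uniformly elliptic equation for the second derivatives of $\phi$ with $C^\alpha$ coefficients, and standard Schauder bootstrap yields $C^{k,\alpha}$ bounds for all $k$. Combined with the openness established in Section \ref{Opennesssec} and the nonemptiness of $S$ at $t=0$ obtained via Fang-Lai-Ma, this closes the continuity method and completes the proof of Theorem \ref{maintheorem}.

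The main technical obstacle I anticipate is the treatment of the error terms generated when condition \ref{conditiononB} is only satisfied up to lower-order pieces: when $\omega$ and $\Omega$ depend nontrivially on $x$, the derivatives $\partial_e A$ and $\partial_{ee} A$ contain contributions proportional to $\partial_e \omega$ and $\partial_e \Omega$, and one must verify that these can be dominated by the strictly positive quantity on the right-hand side of \ref{convexeq} together with the a priori bounds. An alternative route worth attempting is to compose with a monotone function $G$ chosen so that $G \circ \tilde{F}$ is concave only on a narrow tubular neighbourhood of the level set $\{\tilde{F} = 1\}$ rather than globally; the uniform $C^2$ bound from Section \ref{uniformandgradientestimatesec} guarantees that the solution remains in such a neighbourhood along the continuity path, and the classical Evans-Krylov theorem then applies to $G \circ \tilde{F}$ without modification.
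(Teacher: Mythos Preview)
Your outline captures the first half of the Evans--Krylov mechanism but omits the second, and that second half is precisely where the mixed-sign structure bites. Differentiating $\tilde F(A)=1$ twice and invoking Lemma~\ref{convexitylem} gives, for $w=u_{,\gamma\bar\gamma}$, the subsolution inequality $(-F_\mu)\,w_{,\mu\bar\mu}\geq 0$; weak Harnack applied to $M_{2R}-w$ then controls the integral of $(M_{2R}-w)^p$. But to close the oscillation decay you still need the companion inequality for $w-m_{2R}$, and this does \emph{not} come from differentiation. In the classical argument it comes from the two-point comparison
\[
0=F(A(y))-F(A(x))=\int_0^1 \Bigl(-\tfrac{\partial F}{\partial A_{i\bar j}}(sA(x)+(1-s)A(y))\Bigr)\,ds\;\bigl(u_{i\bar j}(x)-u_{i\bar j}(y)\bigr),
\]
and one must check that the integrand is uniformly elliptic \emph{along the entire segment} $s\mapsto sA(x)+(1-s)A(y)$. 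Intermediate points of this segment need not lie on the level set $\{\tilde F=1\}$, so neither Lemma~\ref{convexitylem} nor the conditions~\ref{lambdasatisfy} apply to them a priori. Your sentence ``the Krylov--Safonov weak Harnack inequality applied in the manner of the classical Evans--Krylov argument'' is hiding exactly this step.

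The paper resolves this in two moves you do not make. First, it blows up to a constant-coefficient equation on $\mathbb C^n$, which kills the variable-coefficient error terms you flag at the end (so your worry there is legitimate, and the paper sidesteps it rather than absorbing it). Second, and more importantly, it verifies uniform ellipticity along the segment by exploiting the concavity of $A\mapsto \lambda_2(A)+\lambda_3(A)$ and the convexity of $A\mapsto \lambda_1(A)$ and $A\mapsto 1/\det A$: concavity of $\lambda_2+\lambda_3$ propagates the inequality $c_t(\lambda_i+\lambda_j)+2t\tan\hat\theta>0$ from the endpoints to the whole segment, which is the delicate positivity needed to bound the eigenvalues of the integral kernel $K^{i\bar j}$ from below. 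Without this eigenvalue-sum argument (or an equivalent), the two-point step fails and you cannot conclude $C^{2,\alpha}$. Your alternative of finding $G$ making $G\circ\tilde F$ concave on a tube around the level set runs into the same obstruction: such a tube is generally not convex, so the segment between $A(x)$ and $A(y)$ may exit it.
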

\begin{proof}
As earlier, unless specified otherwise, the constant $C>0$ is allowed to vary from line to line. The manifold $X$ can be covered with finitely many relatively compact coordinate open charts $\bar{U}_{\gamma}\subset V_{\gamma}$ such that $\Omega_{\phi} =\spbp u_{\gamma}$ on $V_{\gamma}$ for a smooth function $u_{\gamma}$ where $\Vert u_{\gamma} \Vert_{C^0(\bar{U}_\gamma)}+\Vert \Delta_{euc} u \Vert_{C^0(\bar{U}_{\gamma})}\leq C$ for some constant $C$ independent of $t$. We drop the subscript $\gamma$ for the remainder of this proposition and focus our attention to a coordinate ball $B_2$ in a  coordinate chart.\\
\indent The function $u$ satisfies 
\begin{gather}
F(A)=\sec^2(\hth) \ \mathrm{where} \ A^i_j = \omega^{i\bar{k}}u_{,i\bar{k}}.
\label{equationlocally}
\end{gather}
At this point, assuming that the desired estimate does not hold,  a standard blow-up argument (the proof of Lemma 6.2 in \cite{collinsjacobyau}) can be used to produce a point $x_0 \in B_1$ and a $C^{3,\alpha}$ function $u :\mathbb{C}^n \rightarrow \mathbb{R}$ satisfying
\begin{gather}
F_{x_0}(A)=\sec^2(\hth), \ \vert \partial \bar{\partial} \partial u \vert (0)=1, \ \mathrm{and}, \nonumber \\
\Vert \spbp u \Vert_{L^{\infty}(\mathbb{C}^n)} \leq C <\infty,
\label{localequation}
\end{gather}
where $F_{x_0}(A) = F(\omega(x_0)^{i\bar{k}}u_{,i\bar{k}})$ is a constant-coefficient nonlinear elliptic operator. By elliptic regularity, $u$ is actually smooth. We now prove a Liouville-type result on $u$.
\begin{lemma}
A smooth function $u :\mathbb{C}^n \rightarrow \mathbb{R}$ satisfying \ref{localequation} is a quadratic polynomial.
\label{Liouville}
\end{lemma}
\begin{proof}
This result is a straightforward corollary of the following proposition.
\begin{proposition}
There exists a constant $\beta>0$ such that for every $R>0$, the function $u$ satisfies $\Vert \spbp u \Vert_{C^{\beta}(B_R(0))}\leq CR^{-\beta}$.
\label{evanskrylovconstant}
\end{proposition}
\begin{proof}
We follow the exposition of the proof of the complex version of the Evans-Krylov theorem in \cite{Siu}. Differentiating Equation \ref{localequation} twice in the direction $\partial_{\gamma}$ and using the convexity property in Lemma \ref{convexitylem}, we get the following.
\begin{gather}
\left (-\frac{\partial F_{x_0}}{\partial A_{i\bar{j}}} \right )u_{,i\bar{j}\gamma\bar{\gamma}} \geq 0. 
\label{difflocaleqtwiceanduseconvex}
\end{gather}
Let $w=u_{,\gamma\bar{\gamma}}$ and $M_R, m_R$ be the maximum and minimum values of $w$ on $B_R(0)$. Then $$g^{i\bar{j}}w_{,i\bar{j}}\geq 0$$ and hence the weak Harnack inequality implies that
\begin{gather}
\left ( \frac{\int_{B_R(0)}(M_{2R}-w)^p}{R^n} \right )^{1/p} \leq C (M_{2R}-M_{R}).
\label{weakHarnack}
\end{gather}
To get a H\"older estimate on $w$, one needs a similar bound on $-w$. To this end, consider the following.
\begin{align}
0&=F_{x_0}(A(y))-F_{x_0}(A(x)) \nonumber \\
&=\displaystyle (u_{i\bar{j}}(x)-u_{i\bar{j}}(y))\int_0^1 \left( -\frac{\partial F_{x_0}}{\partial A_{i\bar{j}}}(sA(x)+(1-s)A(y)) \right) ds \nonumber \\
&=\displaystyle (u_{i\bar{j}}(x)-u_{i\bar{j}}(y)) K^{i\bar{j}}
\end{align}
We now bound the eigenvalues of the matrix $K^{i\bar{j}}$ above and below. Let $v$ be a unit eigenvector with eigenvalue $\lambda$.
\begin{align}
\lambda &= v_i\bar{v}_j K^{i\bar{j}}=\displaystyle \int_0 ^1 v_i \bar{v}_j \left(-\frac{\partial F_{x_0}}{\partial A_{i\bar{j}}}(sA(x)+(1-s)A(y)) \right )ds\nonumber \\
&=\int_0 ^1 \frac{\vert \tilde{v}_{\mu} \vert^2}{\det(sA(x)+(1-s)A(y))}\left ( \frac{c_t \mathrm{tr}(sA(x)+(1-s)A(y))+2t\tan(\hth)}{\lambda_{\mu}(sA(x)+(1-s)A(y))}-c_t\right ) ds, 
\label{eigenvaluebound}
\end{align}
where the last equality is obtained by diagonalisation (which varies with $s$). It is well known (see \cite{Lieb} for instance) that   $\lambda_2(A)+\lambda_3(A)$ is a concave function of $A$. Therefore, the following holds for $i\neq j$.
\begin{align}
c_t(\lambda_i+\lambda_j)(sA(x)+(1-s)A(y)) &\geq c_t(\lambda_2+\lambda_3)(sA(x)+(1-s)A(y)) \nonumber \\
&\geq s c_t(\lambda_2+\lambda_3)(A(x)) +(1-s)c_t(\lambda_2+\lambda_3)(A(y)) \nonumber \\
&> s2t\tan(\hth)+(1-s)2t\tan(\hth)=2t\tan(\hth).
\label{keyconcavity}
\end{align}
Since positive-definiteness is preserved under convex combinations, \ref{keyconcavity} and \ref{eigenvaluebound} imply that $\lambda>0$. In fact, since $\lambda_1 (A)$ is a convex function, i.e., $\lambda_1(sA(x)+(1-s)A(y))\leq s\lambda_1 (A(x))+(1-s)\lambda_1(A(y))\leq C$, we see that $\lambda\geq \frac{1}{C}$. Since $\lambda_3(A)$ is concave and $\frac{1}{\det(A)}$ is convex, we see that 
\begin{gather}
\frac{1}{C}\leq \lambda \leq C.
\label{boundsoneigenvalues}
\end{gather}
\indent From this point onwards, the usual proof of the complex version of the Evans-Krylov theorem (in \cite{Siu}) takes over verbatim and we get the necessary H\"older estimate on $\spbp u$.   
\end{proof}
Indeed, taking $R\rightarrow \infty$ in Propositon \ref{evanskrylovconstant}, we are done.
\end{proof}
Lemma \ref{Liouville} shows that $\partial \bar{\partial} \partial u(0)=0$, thus producing a contradiction. Hence, the desired Evans-Krylov type estimate holds.
\end{proof}
Given Proposition \ref{EvansKrylovprop}, we can use the Schauder estimates to bootstrap the regularity to get $C^{k,\alpha}$ \emph{a priori} estimates for all $k$. Using the Arzela-Ascoli theorem we see that closedness holds. This observation completes the proof of Theorem \ref{maintheorem}. \qed

\end{document}